\newtheorem{prop}{Proposition}[section]
\newtheorem{lemma}[prop]{Lemma}
\newtheorem{thm}[prop]{Theorem}
\newtheorem{cor}[prop]{Corollary}
\theoremstyle{definition}
\newtheorem{defn}[prop]{Definition}
\newtheorem{rmk}[prop]{Remark}
\newtheorem{ex}[prop]{Example}
\DeclareMathOperator{\Hecke}{\cH} 
\DeclareMathOperator{\THecke}{\cF\cH} 
\newcommand{\THH}{\THecke}
\DeclareMathOperator{\Coh}{Coh} 
\newcommand{\TCoh}{\widetilde{\Coh}}
\DeclareMathOperator{\gr}{gr} 
\DeclareMathOperator{\pr}{pr} 
\DeclareMathOperator{\supp}{supp}
\DeclareMathOperator{\PB}{PB}
\DeclareMathOperator{\FDiv}{FDiv}
\DeclareMathOperator{\rk}{rk}        
\DeclareMathOperator{\spec}{Spec} \DeclareMathOperator{\Sym}{Sym}
     \DeclareMathOperator{\Stab}{Stab} 
\DeclareMathOperator{\End}{End}
\newcommand{\ra}{\rightarrow}      
\newcommand{\lra}{\longrightarrow}
\DeclareMathOperator{\Bun}{Bun}
\DeclareMathOperator*{\hocolim}{hocolim}
\DeclareMathOperator{\Div}{Div} \DeclareMathOperator{\quot}{Quot}
\DeclareMathOperator{\Jac}{Jac} 
\DeclareMathOperator{\Hom}{Hom}
\DeclareMathOperator{\Spec}{Spec}
\DeclareMathOperator{\codim}{codim}
\DeclareMathOperator{\Aut}{Aut}
\newcommand{\et}{\mathrm{\acute{e}t}}
\def\cB{\mathcal B}
\def\cE{\mathcal E}\def\cF{\mathcal F}\def\cH{\mathcal H}
\def\cI{\mathcal I}\def\cL{\mathcal L}
\def\cM{\mathcal M}\def\cO{\mathcal O}
\def\cT{\mathcal T}
\def\cU{\mathcal U}\def\cV{\mathcal V}
\def\AA{\mathbb A}
\def\GG{\mathbb G}
\def\NN{\mathbb N}\def\PP{\mathbb P}
\def\QQ{\mathbb Q}
\def\ZZ{\mathbb Z}
 \def\GL{\mathrm{GL}} 
\def\DM{\mathrm{DM}}  \def\DA{\mathrm{DA}}   
\def\CH{\mathrm{CH}}
\def\@tocline#1#2#3#4#5#6#7{\relax
  \ifnum #1>\c@tocdepth % then omit
  \else
    \par \addpenalty\@secpenalty\addvspace{#2}%
    \begingroup \hyphenpenalty\@M
    \@ifempty{#4}{%
      \@tempdima\csname r@tocindent\number#1\endcsname\relax
    }{%
      \@tempdima#4\relax
    }%
    \parindent\z@ \leftskip#3\relax \advance\leftskip\@tempdima\relax
    \rightskip\@pnumwidth plus4em \parfillskip-\@pnumwidth
    #5\leavevmode\hskip-\@tempdima
      \ifcase #1
       \or\or \hskip 1em \or \hskip 2em \else \hskip 3em \fi%
      #6\nobreak\relax
    \hfill\hbox to\@pnumwidth{\@tocpagenum{#7}}\par% <---- \dotfill -> \hfill
    \nobreak
    \endgroup
  \fi}
\title[A formula for the motive of the stack of bundles on a curve]{A formula for the Voevodsky motive of the moduli stack of vector bundles on a curve}
\author{Victoria Hoskins and Simon Pepin Lehalleur}
\thanks{V.H. is supported by the DFG Excellence Initiative at the Freie Universit\"{a}t Berlin and by the SPP 1786.}
\begin{document}

\maketitle

\begin{abstract}
We prove a formula for the motive of the stack of vector bundles of fixed rank and degree over a smooth projective curve in Voevodsky's triangulated category of mixed motives with rational coefficients.% under the assumption that the curve has a rational point.
\end{abstract}

\tableofcontents

\section{Introduction}

Let $\Bun_{n,d}$ denote the moduli stack of rank $n$, degree $d$ vector bundles on a smooth projective geometrically connected curve $C$ of genus $g$ over a field $k$. In this paper, we prove the following formula for the motive of $\Bun_{n,d}$ in Voevodsky's triangulated category $\DM(k):=\DM(k,\QQ)$ of mixed motives over $k$ with $\QQ$-coefficients.

\begin{thm}\label{main thm}
Suppose that $C(k) \neq \emptyset$; then in $\DM(k,\QQ)$, we have
\[M(\Bun_{n,d}) \simeq  M(\Jac(C)) \otimes M(B\GG_m) \otimes \bigotimes_{i=1}^{n-1} Z(C, \QQ\{i\}),\]
where $Z(C,\QQ\{i\}):=\bigoplus_{j=0}^{\infty} M(C^{(j)})\otimes \QQ\{ij\}$ is a motivic Zeta function and $\QQ\{i\} := \QQ(i)[2i]$.
\end{thm}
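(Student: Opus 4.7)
The plan is to prove this by induction on the rank $n$.

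Base case ($n=1$): Since every line bundle has automorphism group $\GG_m$, the stack $\Bun_{1,d}$ is equivalent to $\Pic^d(C) \times B\GG_m$. Using the rational point $x_0 \in C(k)$ given by hypothesis, translation by $\cO(d x_0)$ identifies $\Pic^d(C)$ with the Jacobian $\Jac(C) = \Pic^0(C)$; the K\"unneth formula in $\DM(k,\QQ)$ then yields $M(\Bun_{1,d}) \simeq M(\Jac(C)) \otimes M(B\GG_m)$, as required.

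Inductive step: Assuming the result for rank $n-1$, my plan is to establish, for suitable $d,d'$, the motivic recursion
\[
M(\Bun_{n,d}) \simeq M(\Bun_{n-1,d'}) \otimes Z(C, \QQ\{n-1\}),
\]
by means of a rank-changing Hecke-type correspondence (suggested by the $\Hecke$ and $\THecke$ notation in the preamble). Concretely, I would consider the stack $\cF\cH$ parametrizing short exact sequences $0 \to E' \to E \to L \to 0$ with $E'$ of rank $n-1$ and $L$ invertible, with all degrees varying. It carries two natural morphisms: a forgetful map $p\colon \cF\cH \to \Bun_{n-1} \times \Bun_1$ sending $(E' \hookrightarrow E \twoheadrightarrow L)$ to $(E', L)$, and $q\colon \cF\cH \to \Bun_n$ remembering only the middle term $E$. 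In a cofinal range of degrees, namely where $\Hom(L, E') = 0$, the map $p$ is an affine bundle with fibers $\Ext^1(L, E')$, so it induces a Tate-twist equivalence on motives; the fibers of $q$ parametrize rank-$(n-1)$ subbundles of $E$ and fit together, as $\deg L$ varies, into a Quot-type scheme fibered over the symmetric powers $C^{(j)}$ whose motive---packaged via the motivic zeta function formalism---produces precisely the factor $Z(C, \QQ\{n-1\})$ with the correct Tate twist (since rank-$(n-1)$ subspaces of an $n$-dimensional space are parametrized by $\PP^{n-1}$ of dimension $n-1$).

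The main obstacle is twofold. First, $\Bun_{n,d}$ is not of finite type, so $M(\Bun_{n,d})$ must be defined via an exhaustion by finite-type open substacks (for instance the Harder--Narasimhan truncations signaled by the $\HNT$ macro), and every step of the argument must be shown to respect this exhaustion in an appropriate pro-motivic sense. Second, a careful motivic analysis of the Quot-type fibers of $q$ is needed to recognize the factor $Z(C, \QQ\{n-1\})$: the identification is transparent over reduced divisors (where the fibers become products of $\PP^{n-1}$'s) but requires genuine work over non-reduced divisors, and it is the zeta function formalism for curves that ultimately guarantees correct assembly. The hypothesis $C(k)\neq\emptyset$ enters both in rigidifying the base case and in trivializing various torsors throughout the induction.
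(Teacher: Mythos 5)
Your base case is correct, and the target recursion $M(\Bun_{n,d}) \simeq M(\Bun_{n-1,d'}) \otimes Z(C, \QQ\{n-1\})$ is at least arithmetically consistent with the formula. But the inductive step as sketched has a genuine gap. The crux is the map $q\colon \cF\cH \to \Bun_n$: this is not a Zariski-locally trivial fibration, and its fiber over a fixed bundle $E$ is the space of line-bundle quotients of $E$, an open subscheme of $\quot^{(1,e)}(E)$ whose geometry and motive depend essentially on $E$ (through its Harder--Narasimhan type, its Segre invariants, and so on) and not merely on numerical data. There is no general principle that lets you pass from a description of the individual fibers to an isomorphism $M(\cF\cH)\simeq M(\Bun_n)\otimes(\text{fiber motive})$; you would need an actual bundle structure or a decomposition theorem, and neither is available here. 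The assertion that the fibers of $q$ ``fit together into a Quot-type scheme fibered over $C^{(j)}$'' is also not correct as stated: unlike the rank-preserving case of torsion quotients, the scheme of rank-one locally free quotients of $E$ admits no support-type map to a symmetric power of $C$, and its motive is not a product. A secondary but real issue: $\Hom(L,E')=0$ is never a condition on degrees alone. For any fixed $(d',e)$ and any $L$, the bundle $E'=L\oplus(\text{rest of degree }d'-e)$ gives $\Hom(L,E')\neq 0$, so $p$ is an affine bundle only over a proper open substack of each degree component of $\Bun_{n-1}\times\Bun_1$, and the contribution of the complement has to be controlled --- precisely the ``exhaustion'' difficulty you flag but do not resolve.

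The paper takes a completely different route with no induction on rank. The starting point is the presentation $M(\Bun_{n,d})\simeq\hocolim_l M(\Div_{n,d}(l))$ of Theorem \ref{thm old main}, where the Quot schemes parametrize rank-preserving modifications of a fixed bundle $\cO_C(lx)^{\oplus n}$; the new ingredient is the comparison with the Flag-Quot scheme $\FDiv_{n,d}(l)$, which is an iterated $\PP^{n-1}$-bundle over $C^{nl-d}$, via the small forgetful map $\FDiv_{n,d}(l)\to\Div_{n,d}(l)$ (generically an $S_{nl-d}$-covering). Theorem \ref{thm:actions}, which concerns group actions on motives along small proper maps and is the reason rational coefficients are needed, then yields $M(\Div_{n,d}(l))\simeq\Sym^{nl-d}M(C\times\PP^{n-1})$ in a way that makes the transition maps $M(i_l)$ explicitly computable (Proposition \ref{prop THecke trans}, Corollary \ref{cor1 transition maps div}), and the formula drops out of the homotopy colimit. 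Incidentally, the macro $\cF\cH$ you noticed in the preamble denotes these schemes of iterated \emph{rank-preserving} Hecke modifications by torsion sheaves, not the rank-changing correspondence stack you introduce.
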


In particular, this implies a decomposition on Chow groups and $\ell$-adic cohomology and, as explained below, this formula is compatible with previous cohomological descriptions of $\Bun_{n,d}$. 

This paper is a continuation of our previous work \cite{HPL} in which we define and study the motive $M(\Bun_{n,d}) \in \DM(k,R)$ for any coefficient ring $R$ (provided the characteristic of $k$ is invertible in $R$ in positive characteristic); more generally, we introduce there the notion of an exhaustive stack and define motives of smooth exhaustive stacks by generalising a construction of Totaro for quotient stacks \cite{totaro} (see \cite[Definitions 2.15 and 2.17]{HPL} for details).

%Using this definition, we describe the motive of $\Bun_{n,d}$ as a homotopy colimit of motives of smooth projective Quot schemes by following a geometric argument for computing the $\ell$-adic cohomology of this stack in \cite{BGL}. We use motivic Bia{\l}ynicki-Birula decompositions associated to $\GG_m$-actions on these Quot schemes to further describe the motive of $\Bun_{n,d}$ and based on these decompositions we conjecture the formula for the motive of $\Bun_{n,d}$ appearing in Theorem \ref{main thm} for a general coefficient ring $R$. In \cite{HPL}, we show this conjecture follows from a conjecture describing the interaction of the transition maps in the homotopy colimit for $M(\Bun_{n,d})$ with these Bia{\l}ynicki-Birula decompositions; however, we were unable to prove this conjecture on the transition maps.

\subsection{Overview of our previous results}

In \cite[Theorem 3.5]{HPL}, we work with a general coefficient ring $R$ (for which the exponential characteristic is invertible) and give the following description of the motive of the stack $\Bun_{n,d}$ in terms of smooth projective Quot schemes by following a geometric argument for computing the $\ell$-adic cohomology of this stack in \cite{BGL}.

\begin{thm}\label{thm old main}
For any effective divisor $D>0$ on $C$, we have in $\DM(k,R)$
\[M(\Bun_{n,d})\simeq \hocolim_{l\in\NN} M(\Div_{n,d}(lD)),\]
where $\Div_{n,d}(D)=\{ \cE \subset \cO_C(D)^{\oplus n} : \rk(\cE) =n, \deg(\cE) = d\}$ is a smooth Quot scheme.
\end{thm}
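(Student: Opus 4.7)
The plan is to realize $\Div_{n,d}(lD)$ as an open subscheme of a vector bundle over an open substack $U_l \subset \Bun_{n,d}$ which exhausts the stack as $l$ grows, and then to leverage motivic $\AA^1$-homotopy invariance together with the exhaustive stack formalism of \cite[\S 2]{HPL}. Concretely, for each $l$ I would take $U_l \subset \Bun_{n,d}$ to be the open substack parametrizing bundles $\cE$ with $H^1(C,\cE^\vee \otimes \cO_C(lD)) = 0$, so that by Serre duality and Riemann–Roch the universal Hom
\[
\cV_l := \underline{\Hom}(\cE_{\mathrm{univ}}, \cO_C(lD)^{\oplus n}) \lra U_l
\]
is a vector bundle of constant rank. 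The family of $U_l$'s is open, ascending, and covers every quasi-compact substack of $\Bun_{n,d}$ (any vector bundle $\cE$ has $H^1$ vanishing after a sufficient twist), so the exhaustiveness criterion of \cite{HPL} yields $M(\Bun_{n,d}) \simeq \hocolim_l M(U_l)$.

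Next, inside $\cV_l$ the Quot scheme $\Div_{n,d}(lD)$ is identified with the open subscheme where the universal morphism $\cE_{\mathrm{univ}} \to \cO_C(lD)^{\oplus n}$ is injective with torsion cokernel of the correct length (over $U_l$ this is automatic from the injectivity, since the generic rank is then forced). Motivic $\AA^1$-invariance gives $M(\cV_l) \simeq M(U_l)$. It therefore suffices to compare, in the homotopy colimit, the open $\Div_{n,d}(lD) \subset \cV_l$ with the full vector bundle; writing $Z_l := \cV_l \setminus \Div_{n,d}(lD)$ and stratifying $Z_l$ by the rank drop of the universal map, a routine dimension count shows that every stratum has codimension in $\cV_l$ that grows linearly in $l$. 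The Gysin/localization triangles for these strata take the form
\[
M(\Div_{n,d}(lD)) \lra M(\cV_l) \lra M(Z_l)\{c_l\} \lra
\]
with Tate twists $\QQ\{c_l\}$ where $c_l \to \infty$.

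The main obstacle is passing from these pointwise estimates to a statement in $\hocolim_l$: one must show that the error terms $M(Z_l)\{c_l\}$ become negligible when the transition maps are taken into account, i.e.\ that the natural maps $\Div_{n,d}(lD) \to \Div_{n,d}((l+1)D)$ induced by the inclusion $\cO_C(lD) \hookrightarrow \cO_C((l+1)D)$ are compatible with the open immersions $U_l \hookrightarrow U_{l+1}$ and with the Gysin triangles in such a way that the non-injective loci are pushed into ever deeper codimension. Concretely, one needs to verify that the composition $M(Z_l)\{c_l\} \to M(Z_{l+1})\{c_{l+1}\}$ factors through sufficiently many Tate twists so that $\hocolim_l M(Z_l)\{c_l\} = 0$ in $\DM(k,R)$; this reduces the assertion to $\hocolim_l M(\Div_{n,d}(lD)) \simeq \hocolim_l M(\cV_l) \simeq \hocolim_l M(U_l) \simeq M(\Bun_{n,d})$. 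This transition/connectivity step, rather than any single dimension count, is the real content of the proof, and closely parallels the corresponding step in the $\ell$-adic argument of \cite{BGL} that inspires the whole setup.
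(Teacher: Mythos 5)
This theorem is not proved in the present paper at all: it is quoted verbatim from \cite[Theorem 4.4]{HPL}, whose proof in turn adapts the matrix-divisor argument of \cite{BGL} to the motivic setting. So there is no proof in the paper to compare against, only that reference. That said, your outline does have exactly the right shape: choosing the ascending open substacks $U_l \subset \Bun_{n,d}$ by $H^1$-vanishing, realising $\Div_{n,d}(lD)$ as the open locus of injective maps inside the vector bundle $\cV_l \to U_l$, invoking $\AA^1$-invariance for $M(\cV_l)\simeq M(U_l)$, and then exhausting $\Bun_{n,d}$ by the $U_l$. This is precisely the geometric setup underlying \cite[Theorem 4.4]{HPL}.

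The gap is the last step, which you correctly flag as ``the real content'' but leave unargued. You write a single Gysin triangle $M(\Div_{n,d}(lD)) \to M(\cV_l) \to M(Z_l)\{c_l\} \to$, but $Z_l$ is not smooth of pure codimension, so one only gets an iterated system of localization triangles (or, dually, the $M^c$-localization triangle and Poincar\'e duality), and the third term is not literally $M(Z_l)\{c_l\}$ but rather a motive lying in $\DM^{\eff}(k,R)\{c_l\}$ whose ``effective part'' has unbounded dimension as $l$ grows. More importantly, the assertion that $\hocolim_l$ of such $\{c_l\}$-twisted effective motives vanishes because $c_l \to \infty$ is not automatic in a compactly generated triangulated category: one needs a quantitative connectivity/bounded-dimension estimate for the cones $C_l$ together with a Milnor-sequence/compactness argument (test against compact generators $M(W)$, show $\Hom(C_l, M(W))$ and $\Hom(C_l, M(W)[-1])$ vanish for $l\gg 0$, then conclude). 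Without keeping track of how both $\dim Z_l$ and $c_l$ grow with $l$ and feeding those bounds into such an argument, the claim $\hocolim_l C_l = 0$ is precisely the point that still needs a proof; this is what \cite{HPL} supplies and what your proposal omits.
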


Our approach in \cite{HPL} to describing the motives $M(\Div_{n,d}(lD))$ is to use Bia{\l}ynicki-Birula decompositions \cite{BB_original} associated to an action of a generic one-parameter subgroup $\GG_m\subset \GL_n$ on these Quot schemes, whose fixed loci are disjoint unions of products of symmetric powers of $C$. To use these decompositions to compute the motive of $\Bun_{n,d}$, one needs to understand the behaviour of the transition maps $i_l: \Div_{n,d}(lD) \hookrightarrow \Div_{n,d}((l+1)D)$ in the inductive system in Theorem \ref{thm old main} with respect to the motivic Bia{\l}ynicki-Birula decompositions; this is very complicated, as although the closed immersion $i_l$ is $\GG_m$-equivariant, the closed subscheme $\Div_{n,d}(lD) \hookrightarrow \Div_{n,d}((l+1)D)$ does not intersect the Bia{\l}ynicki-Birula strata transversally. We conjecture a precise description of these transition maps \cite[Conjecture 3.9]{HPL} and show that the formula for the motive of $\Bun_{n,d}$ appearing in Theorem \ref{main thm} follows from this conjectural description of the transition maps.

%\begin{conj}\label{main conj}
%Suppose that $C(k) \neq \emptyset$; then in $\DM(k,R)$, we have
%\[M(\Bun_{n,d}) \simeq  M(\Jac(C)) \otimes M(B\GG_m) \otimes \bigotimes_{i=1}^{n-1} Z(C, R(i)[2i]). \]
%where $Z(C,R(i)[2i]):=\bigoplus_{j=0}^{\infty} M(C^{(j)})\otimes R(ij)[2ij]$ denotes the motivic Zeta function.
%\end{conj}

\subsection{Summary of the results and methods in this paper}

In this paper, we prove the conjectural formula in \cite{HPL} under the assumption that $R = \QQ$. The main idea is to replace the Quot schemes with Flag-Quot schemes, which are generalisations of Quot schemes that allow flags of sheaves and then to describe the transition maps using these Flag-Quot schemes without using Bia{\l}ynicki-Birula decompositions. The idea to use Flag-Quot schemes was inspired by a result of Laumon in \cite{laumon} and its application in a paper of Heinloth to study the cohomology of the moduli space of Higgs bundles using Hecke modification stacks \cite{Heinloth_LaumonBDay}. 

To prove Theorem \ref{main thm}, our starting point is Theorem \ref{thm old main}, where as we assume that $C$ has a rational point $x$ we can take the divisor $D := x$ and write $\Div_{n,d}(l):= \Div_{n,d}(lx)$. We replace the Quot schemes $\Div_{n,d}(l)$ with smooth projective Flag-Quot schemes
\[ \FDiv_{n,d}(l) = \{ \cE_{nl -d} \subsetneq  \cdots \subsetneq \cE_1 \subsetneq \cE_{0} = \cO_C(lx)^{\oplus n} : \rk(\cE_i) = n \text{ and } \deg{\cE_i} = nl-i \}. \]
The natural map $ \FDiv_{n,d}(l) \ra \Div_{n,d}(l)$ is small and is a $S_{nl-d}$-principal bundle over the open subset consisting of subsheaves $\cE \subset \cO_C(lx)^{\oplus n}$ with torsion quotient that has support consisting of $nl-d$ distinct points. Using these facts, we relate the motives of these two varieties as follows.

\begin{thm}\label{thm intro FDiv}
There is an induced $S_{nl-d}$-action on $M(\FDiv_{n,d}(l))$ and 
%we can define the invariant piece $M(\FDiv_{n,d}(l))^{S_{nl-d}}$ as a direct summand, as we are working with $\QQ$-coefficients. Moreover, there are 
isomorphisms
\[M(\Div_{n,d}(l)) \simeq M(\FDiv_{n,d}(l))^{S_{nl-d}} \simeq (M(C \times \PP^{n-1})^{\otimes nl-d})^{S_{nl-d}} \simeq \Sym^{nl-d}(M(C \times \PP^{n-1})).\]
\end{thm}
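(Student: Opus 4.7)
The plan is to establish the three isomorphisms separately, reading from right to left. The rightmost isomorphism $(M(C \times \PP^{n-1})^{\otimes (nl-d)})^{S_{nl-d}} \simeq \Sym^{nl-d}(M(C \times \PP^{n-1}))$ is the very definition of the symmetric power in the $\QQ$-linear symmetric monoidal category $\DM(k,\QQ)$ and requires no argument.

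For the middle isomorphism, I would exhibit $\FDiv_{n,d}(l)$ as an iterated projective bundle. The forgetful morphism $\FDiv_{n,d}(l) \to \FDiv_{n,d+1}(l)$ that drops the smallest term $\cE_{nl-d}$ of the flag has fibre over $\cE_\bullet$ equal to the space of length-$1$ torsion quotients of the locally free sheaf $\cE_{nl-d-1}$; recording the support point of such a quotient produces a Zariski-locally trivial $\PP^{n-1}$-bundle $\FDiv_{n,d}(l) \to C \times \FDiv_{n,d+1}(l)$, namely the projectivisation of the pullback of the universal subsheaf. The projective bundle formula combined with K\"unneth yields $M(\FDiv_{n,d}(l)) \simeq M(C \times \PP^{n-1}) \otimes M(\FDiv_{n,d+1}(l))$, and iterating down to the base case $\FDiv_{n,nl}(l) = \Spec(k)$ gives $M(\FDiv_{n,d}(l)) \simeq M(C \times \PP^{n-1})^{\otimes (nl-d)}$. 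The required $S_{nl-d}$-action is then transported from the tautological permutation action on this tensor product. One has to check that it coincides, generically, with the geometric $S_{nl-d}$-action on the \'etale torsor $\pi^{-1}(U) \to U$, which follows because permuting tensor factors corresponds to reordering the length-$1$ composition factors of the torsion quotient $\cO_C(lx)^{\oplus n}/\cE_{nl-d}$.

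The main obstacle is the leftmost isomorphism $M(\Div_{n,d}(l)) \simeq M(\FDiv_{n,d}(l))^{S_{nl-d}}$. Since both schemes are smooth and projective and $\pi$ is proper, the motives lie in the Chow subcategory of $\DM(k,\QQ)$, so one can work with finite correspondences. My plan is to build a transfer $\tr_\pi \colon M(\Div_{n,d}(l)) \to M(\FDiv_{n,d}(l))$ from the transpose of the graph of $\pi$, and to verify the two identities $M(\pi) \circ \tr_\pi = (nl-d)! \cdot \id$ and $\tr_\pi \circ M(\pi) = \sum_{\sigma \in S_{nl-d}} \sigma$; together with the averaging projector $\frac{1}{(nl-d)!}\sum_\sigma \sigma$ these identify $M(\Div_{n,d}(l))$ with the invariant summand $M(\FDiv_{n,d}(l))^{S_{nl-d}}$. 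The difficulty is that $\pi$ fails to be flat along the deep diagonal loci of $\Div_{n,d}(l)$, where its fibres are positive-dimensional partial flag varieties, so the correspondence $\tr_\pi \circ M(\pi)$ a priori receives excess contributions beyond $\sum_\sigma \sigma$. The smallness hypothesis on $\pi$ is precisely what controls this: it forces the excess components in the scheme-theoretic fibre product $\FDiv_{n,d}(l) \times_{\Div_{n,d}(l)} \FDiv_{n,d}(l)$ to be supported in codimension high enough to vanish motivically after rationalisation, in the spirit of the decomposition theorem for small proper maps. Making this cancellation explicit, or alternatively invoking a suitable motivic decomposition theorem for small maps with rational coefficients, is the crux of the argument.
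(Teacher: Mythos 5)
Your overall plan tracks the paper's proof quite closely: the iterated $\PP^{n-1}$-bundle structure you describe is exactly Lemma \ref{lemma it proj bdle}, the rightmost identification is indeed a definition, and the leftmost isomorphism does come down to exploiting the smallness of the forgetful map $\pi\colon \FDiv_{n,d}(l)\to \Div_{n,d}(l)$ and the generic $S_{nl-d}$-torsor structure. But you have correctly identified the crux and then left it unfinished, and the way you frame it hides where the real content lies.

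The missing ingredient is the pair of statements that the paper isolates as Proposition \ref{prop:relative_action} and Proposition \ref{prop:end_ring_iso}: first, for $f\colon X\to Y$ proper with $X$ smooth of dimension $d$, there is a canonical ring isomorphism $\CH_d(X\times_Y X)\simeq \End_{\DM(Y)}(f_*\QQ_X)$; second, for $f$ small and surjective between smooth varieties, restriction to the open locus $Y^\circ$ with finite fibres induces an isomorphism on these endomorphism rings, precisely because by Lemma \ref{lem:small_dim} the $d$-dimensional components of $X\times_Y X$ are closures of components of $X^\circ\times_{Y^\circ}X^\circ$ and lower-dimensional pieces do not contribute to $\CH_d$. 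This is exactly what you would need to legitimize your claimed identities $M(\pi)\circ\tr_\pi=(nl-d)!\cdot\id$ and $\tr_\pi\circ M(\pi)=\sum_\sigma\sigma$: both can be checked on the open locus where $\pi$ is a finite $S_{nl-d}$-torsor (this part is \cite[Lemme 2.1.165]{Ayoub_these_1}), and the smallness argument transports them to all of $\Div_{n,d}(l)$. Without making this precise, your sentence about excess components being ``supported in codimension high enough to vanish motivically after rationalisation'' is not quite right; nothing ``vanishes after rationalisation''. What happens is that those components have dimension strictly less than $d=\dim X$ and hence contribute zero to the relevant group $\CH_d(X\times_Y X)$; rationalisation enters only to define the averaging idempotent $\frac{1}{(nl-d)!}\sum_\sigma\sigma$.

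There is also a subtle ordering issue with how you set up the $S_{nl-d}$-action. You propose to define the action on $M(\FDiv_{n,d}(l))$ by transporting the permutation action through the projective bundle isomorphism, and then verify it agrees with the geometric action on the open torsor. That verification is necessary but not by itself sufficient: to then conclude that $\tr_\pi\circ M(\pi)$ is given by the sum over this transported action, you again need the injectivity of restriction on endomorphisms, which is the content of Proposition \ref{prop:end_ring_iso}. The paper avoids the ambiguity by running the logic in the other direction (Theorem \ref{thm:actions}): the action is first defined as the unique extension of the geometric action on the open locus, the isomorphism $M(\FDiv)^{S_{nl-d}}\simeq M(\Div)$ then comes for free from the projector identity checked generically, and only afterwards does one identify the action with the permutation of tensor factors using the universal line bundles of Remark \ref{rmk line bundles THecke} (this is the content of Theorem \ref{thm Sl action on THecke}). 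Either order can be made to work, but in both cases Propositions \ref{prop:relative_action} and \ref{prop:end_ring_iso} are what make the smallness argument bite, and that is the step you need to supply rather than gesture at.

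One final remark: the paper proves the more general Theorem \ref{thm Sl action on THecke} in the relative setting over a smooth base $T$, which forces the use of the six-functor formalism rather than Chow correspondences for smooth projective varieties. For the specific statement here, where $T=\Spec k$ and both schemes are smooth projective, your Chow-correspondence framing is legitimate and somewhat more elementary, so that aspect of your plan is fine; just be aware it will not directly generalize to the families used elsewhere in Section \ref{sec motives Hecke schemes}.
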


An isomorphism $M(\Div_{n,d}(l)) \simeq \Sym^{nl-d}(M(C \times \PP^{n-1}))$ is constructed by del  Ba\~{n}o \cite[Theorem 4.2]{Del_Bano_motives_moduli} using associated motivic Bia{\l}ynicki-Birula decompositions (see also \cite[$\S$3.2]{HPL}). However, in del  Ba\~{n}o's description, we do not understand the transition maps $M(i_l)$.

In fact, we deduce Theorem \ref{thm intro FDiv} as a special case of a more general result (Theorem \ref{thm Sl action on THecke}), where we replace $\cO_C(lx)^{\oplus n} \ra C/k$ with a family of vector bundles $\cE \ra T \times C/T$ parametrised by a smooth $k$-scheme $T$ and then study the motives of schemes of (iterated) Hecke correspondences as (Flag)-Quot schemes over $T$. This work was inspired by a beautiful description of the cohomology of these schemes due to Heinloth (see the proof of \cite[Proposition 11]{Heinloth_LaumonBDay} which uses ideas of Laumon \cite[Theorem 3.3.1]{laumon}).  In fact we lift Heinloth's cohomological description of schemes of (iterated) Hecke correspondences to $\DM(k)$. To prove this result, in $\S\ref{sec small maps}$ we study the invariant piece of a motive with a finite group action, which is why we need to work with rational coefficients; the main result is Theorem \ref{thm:actions}, which states that for a small proper map $f : X \twoheadrightarrow Y$ of smooth projective $k$-varieties which is a principal $G$-bundle on the locus with finite fibres, we have an isomorphism $M(X)^G \cong M(Y)$. In $\S$\ref{sec motives Hecke schemes}, we study the geometry and motives of schemes of (iterated) Hecke correspondences in order to prove Theorem \ref{thm Sl action on THecke}. Furthermore, we obtain a formula for the motive of the Quot scheme of length $l$ torsion quotients of a rank $n$ locally free sheaf $\cE$ on $C$, which is independent of $\cE$ (Corollary \ref{cor motive torsion quot}); this complements recent analogous results in the Grothendieck ring of varieties \cite{BFP,Ricolfi}.

In $\S$\ref{sec lift trans}, we lift the transition maps $i_l : \Div_{n,d}(l) \ra \Div_{n,d}(l+1)$ to the schemes $\FDiv_{n,d}(l)$. It turns out to be much simpler to describe the motivic behaviour of the lifts of the transition maps to Flag-Quot schemes, as those are iterated projective bundles over products of the curve. By symmetrising this description, we deduce the corresponding behaviour for the maps $M(i_l)$ which enables us to prove Theorem \ref{main thm} in $\S$\ref{sec proof1}. Finally, in $\S$\ref{sec proof2}, we give a second proof of this formula for $M(\Bun_{n,d})$ which follows more closely the ideas in our previous work \cite{HPL}.

It remains an interesting open question as to whether Theorem \ref{main thm} holds integrally. One may expect this to be the case, as Atiyah and Bott \cite{atiyah_bott} gave an integral description of the cohomology of $\Bun_{n,d}$. In fact, in future work we plan to remove the assumption that $C$ has a rational point, by giving a more canonical construction of the isomorphism in Theorem \ref{main thm} inspired by \cite{atiyah_bott}.

By Poincar\'{e} duality, we obtain a formula for the compactly supported motive $M^c(\Bun)$, which compares nicely with previous results, such as the Behrend--Dhillon formula for the virtual class of $\Bun_{n,d}$ in the Grothendieck ring of varieties \cite{BD} and Harder's formula for the stacky point count over a finite field \cite{Harder} (see the discussion in \cite[$\S$4.2]{HPL}).

\begin{cor}
Assume $C(k) \neq \emptyset$; then the compactly supported motive of $\Bun_{n,d}$ is given by
\[M^c(\Bun_{n,d})  \simeq M(\Jac C)\otimes M^c(B\GG_m)\{(n^2 -1 )(g-1) \}   \otimes \bigotimes_{i=2}^n Z(C, \QQ\{-i\}). \]
\end{cor}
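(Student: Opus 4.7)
The plan is to derive the corollary directly from Theorem \ref{main thm} using Poincar\'{e} duality. Since $\Bun_{n,d}$ is smooth exhaustive of dimension $n^2(g-1)$, the formalism of compactly supported motives for smooth exhaustive stacks developed in \cite[\S 5]{HPL} gives an isomorphism
\[ M^c(\Bun_{n,d}) \simeq M(\Bun_{n,d})^{\vee}\{n^2(g-1)\}. \]
The strategy is to substitute the formula of Theorem \ref{main thm} into the right hand side and compute the dual factor by factor.

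For the smooth projective factors, standard Poincar\'{e} duality identifies $M(\Jac C)^{\vee} \simeq M(\Jac C)\{-g\}$ and $M(C^{(j)})^{\vee} \simeq M(C^{(j)})\{-j\}$ for each symmetric power. For the classifying stack, which has virtual dimension $-1$, the same formalism yields $M(B\GG_m)^{\vee} \simeq M^c(B\GG_m)\{1\}$. Dualising the Zeta function termwise then produces
\[ Z(C,\QQ\{i\})^{\vee} \simeq \bigoplus_{j \geq 0} M(C^{(j)}) \otimes \QQ\{-(i+1)j\} = Z(C,\QQ\{-(i+1)\}). \]
Assembling these dual factors with the overall twist $\{n^2(g-1)\}$, the Tate twists coming from the Jacobian and from $B\GG_m$ combine to $-g + 1 + n^2(g-1) = (n^2-1)(g-1)$, which I would record on the $M^c(B\GG_m)$ factor. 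Reindexing $i \mapsto i-1$ then converts $\bigotimes_{i=1}^{n-1} Z(C,\QQ\{-(i+1)\})$ into $\bigotimes_{i=2}^{n} Z(C,\QQ\{-i\})$, yielding the stated formula.

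The main subtlety to verify is the interaction of duality with the infinite direct sum defining the Zeta function: in a triangulated setting duals send sums to products, so one must treat $Z(C,\QQ\{i\})$ as an ind-motive and identify its dual at the appropriate categorical level. The argument also depends on Poincar\'{e} duality for the non-compact smooth stacks $B\GG_m$ and $\Bun_{n,d}$, which is precisely the content of the compactly supported motive formalism in \cite[\S 5]{HPL}. Once these foundational inputs are in place, the remainder is purely formal bookkeeping of Tate twists.
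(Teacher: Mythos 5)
Your proposal is correct and matches the paper's approach: the paper derives this corollary by Poincar\'{e} duality from Theorem \ref{main thm}, relying on the formalism for $M^c$ of smooth exhaustive stacks in \cite[\S 5]{HPL}, which is exactly what you do. The bookkeeping of twists $(-g) + 1 + n^2(g-1) = (n^2-1)(g-1)$ and the reindexing $i \mapsto i+1$ are both correct, and you rightly flag that the termwise dualisation of the infinite sums defining $Z(C,\QQ\{i\})$ and $M(B\GG_m)$ is the only non-formal input, which is supplied by the cited reference.
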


% TODO: Other possible things to discuss: requirement of rational point, motivic Atiyah-Bott (future paper), Higgs (future paper). Corollaries: compactly supported motive, SL and so on.

\subsection{Background on motives}

Let us briefly recall some basic properties about $\DM(k):=\DM(k,\QQ)$. It is a monoidal $\QQ$-linear triangulated category. For a separated scheme $X$ of finite type over $k$, we can associate a motive $M(X)\in \DM(k)$, which is covariantly functorial in $X$ and behaves like a homology theory. The motive $M(\Spec k):=\QQ\{0\}$ is the unit for the monoidal structure, and there are Tate motives $\QQ\{n\}:=\QQ(n)[2n] \in \DM(k)$ for all $n\in\ZZ$. For any motive $M$ and $n \in \ZZ$, we write $M\{n\}:=M\otimes \QQ\{n\}$. 

In $\DM(k)$, there are K\"{u}nneth isomorphisms, $\AA^1$-homotopy invariance, Gysin distinguished triangles, projective bundle formulae and Poincar\'{e} duality isomorphisms, as well as realisation functors (to compare with Betti, de Rham and $\ell$-adic cohomology) and descriptions of Chow groups as homomorphism groups in $\DM(k)$. For a precise statement of these results, we refer the reader to the summary in \cite[$\S$2]{HPL}.

In this paper, unlike in \cite{HPL}, we need to use categories of relative motives over varying base schemes, and the associated ``six operations'' formalism. We only need a small portion of the machinery, which we summarise here; for more details, see \cite[\S 3]{Ayoub_Survey}. Given a base scheme $S$, which in this paper will always be of finite type and separated over the field $k$, there is a monoidal $\QQ$-linear triangulated category $\DM(S)$, which we take to be the category $\DA^{\et}(S,\QQ)$ of \cite{Ayoub_Survey} and \cite[\S 3]{Ayoub_etale}. The monoidal unit of $\DM(S)$ is denoted by $\QQ_S$ (in particular, $\QQ_k:=\QQ\{0\}\in\DM(k)$). Given a morphism $f:S\ra T$ between two such base schemes (so that $f$ is automatically separated and of finite type), there are two adjunctions
\[ f^*:\DM(T)\leftrightarrows \DM(S):f_* \quad \quad \text{and} \quad \quad f_!:\DM(S)\leftrightarrows \DM(T):f^! \]
which satisfies the same formal properties as the corresponding adjunctions $(f^*,Rf_*)$ and $(Rf_!,f^!)$ in the setting of derived categories of $\ell$-adic sheaves. In particular, we have natural isomorphisms $f_*\simeq f_!$ for $f$ proper, and $f^*\simeq f^!$ for $f$ \'etale. We also have proper base change (in the general form of \cite[Theorem 3.9]{Ayoub_Survey}) and a purity isomorphism $f^!\simeq f^*(-)\{d\}$ for $f$ smooth of relative dimension $d$.

Many constructions in $\DM(k)$ have an alternative description in terms of the six operations formalism:  for a $k$-scheme $X$ with structure map $\pi_X$, we have
\[
M(X)\simeq \pi_{X!}\pi_X^!\QQ_k\quad \quad\text{and} \quad \quad M^c(X)\simeq \pi_{X*}\pi_X^!\QQ_k. \\
\]

%\noindent \textbf{Notation and conventions.}
%Throughout all schemes and stacks are assumed to be defined over a fixed field $k$. 
%For $n \in \NN$ and a quasi-projective variety $X$, the symmetric group $\Sigma_n$ acts on $X^{\times n}$ and the quotient is representable by a quasi-projective variety $X^{(n)}$, the $n$-th symmetric power of $X$.\\

\noindent \textbf{Acknowledgements.} We thank Elden Elmanto, Michael Gr\"{o}chenig, Jochen Heinloth, Frances Kirwan and Marc Levine for useful discussions.

\section{Small maps and induced actions on motives}\label{sec small maps}

\subsection{Properties of small maps}

Let us recall the following definition.

\begin{defn}\label{def:small}
  Let $X$ and $Y$ be algebraic varieties\footnote{Here as in the rest of the paper, variety means finite type separated over $k$, not necessarily irreducible.} over $k$, and let $f:X\ra Y$ be a proper morphism. For $\delta\in\NN$, define
  \[
Y_{f,\delta}:=\{y\in Y|\dim(f^{-1}(y))=\delta\}.
\]
This is a locally closed subscheme of $Y$, and so its codimension in $Y$ makes sense. We say $f$ is
\begin{enumerate}[label={\upshape(\roman*)}]
\item semismall if $\codim_{Y}(Y_{f,\delta})\geq 2\delta$ for all $\delta\geq 0$.
\item small if $f$ is semismall and $\codim_{Y}(Y_{f,\delta})> 2\delta$ if for all $\delta> 0$.
\end{enumerate}
\end{defn}

\begin{rmk}
This formulation in terms of codimension also makes sense when $X,Y$ are algebraic stacks and $f:X\ra Y$ is a proper representable morphism.
\end{rmk}

\begin{lemma}\label{lem:small_bc}
Proper (semi)small morphisms are stable by flat base change.
\end{lemma}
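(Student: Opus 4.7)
The plan is a straightforward fiber-by-fiber analysis, using that the relevant loci in $Y$ (resp.\ $Y'$) detect the dimension of the geometric fibres of $f$ (resp.\ $f'$), and that flatness preserves codimension of preimages.

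First, given a flat morphism $g : Y' \to Y$, form the base change $f' : X' := X \times_Y Y' \to Y'$ and $g' : X' \to X$. Since properness is stable under arbitrary base change, $f'$ is automatically proper. The key geometric input is the identification of the dimension-of-fibre stratifications: for any $y' \in Y'$ mapping to $y := g(y') \in Y$, we have $f'^{-1}(y') \simeq f^{-1}(y) \times_{\Spec k(y)} \Spec k(y')$, and since $k(y) \to k(y')$ is a field extension, this base change preserves the dimension of finite-type $k(y)$-schemes. Hence $\dim f'^{-1}(y') = \dim f^{-1}(g(y'))$, which gives the set-theoretic equality $Y'_{f',\delta} = g^{-1}(Y_{f,\delta})$ of locally closed subschemes of $Y'$ for every $\delta \geq 0$.

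Next, I use that flat morphisms of finite-type $k$-schemes preserve codimensions of preimages: for $Z \subseteq Y$ locally closed and $Z' := g^{-1}(Z) \subseteq Y'$ (assumed non-empty), at every point $y' \in Z'$ over $y \in Z$ the standard dimension formula for flat local morphisms
\[ \dim \mathcal{O}_{Y',y'} = \dim \mathcal{O}_{Y,y} + \dim \bigl(\mathcal{O}_{Y',y'} / \mathfrak{m}_y \mathcal{O}_{Y',y'}\bigr), \]
applied also to $\mathcal{O}_{Z',y'}$ over $\mathcal{O}_{Z,y}$, yields $\codim_{y'}(Z', Y') = \codim_y(Z, Y)$; taking the infimum over points then gives $\codim_{Y'}(Z') = \codim_Y(Z)$ (if $Z' = \emptyset$ then its codimension is $+\infty$ and the inequality we need is trivial). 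Applied to $Z = Y_{f,\delta}$, this yields $\codim_{Y'}(Y'_{f',\delta}) \geq \codim_Y(Y_{f,\delta})$, and the semismall (resp.\ small) bound $\codim_Y(Y_{f,\delta}) \geq 2\delta$ (resp.\ $> 2\delta$ for $\delta > 0$) transfers verbatim to $f'$.

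The only non-trivial ingredient is the codimension preservation under flat base change, which is a standard consequence of the flat dimension formula in commutative algebra and is not really an obstacle; the identification $Y'_{f',\delta} = g^{-1}(Y_{f,\delta})$ is formal from the compatibility of fibres with base change. Consequently the lemma follows with no further input, and in particular both the semismall and small cases are handled uniformly.
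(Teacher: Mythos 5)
Your proof follows the same argument as the paper: identify $Y'_{f',\delta} = g^{-1}(Y_{f,\delta})$ via base change of fibres, and then use flatness to compare codimensions. The only slight imprecision is the claim that $\codim_{Y'}(Z') = \codim_Y(Z)$ globally: the pointwise formula gives $\codim_{y'}(Z',Y')=\codim_{g(y')}(Z,Y)$ at each $y'\in Z'$, but after taking the infimum you in general only get the inequality $\codim_{Y'}(Z') \geq \codim_Y(Z)$ (the points of $Z$ of minimal local codimension need not lie in the image of $g$); since that inequality is exactly what you then use, the argument is unaffected.
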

\begin{proof}
  Let $f:X\ra Y$ be a proper morphism and $g:Z\ra Y$ be a flat morphism. Write $\tilde{f}:X\times_{Y}Z\ra Z$. With the notations of Definition \ref{def:small}, for all $\delta\in \NN$ we have $Z_{\tilde{f},\delta}=g^{-1}(Y_{f,\delta})$. Since $g$ is flat, we deduce that
  \[
\codim_{Z}(Z_{\tilde{f},\delta}) = \codim_Z (g^{-1}(Y_{f,\delta}))\geq \codim_Y (Y_{f,\delta})
\]
which implies the result.
\end{proof}

\begin{rmk}
This property also holds for proper representable (semi-)small morphisms between algebraic stacks, with the same proof.
\end{rmk}  

The key property of (semi-)small morphisms for this paper is the following lemma.

\begin{lemma}\cite[Proposition 2.1.1, Remark 2.1.2]{dCM_small}\label{lem:small_dim}
Let $f:X\ra Y$ be a proper morphism of varieties. For $\delta\in \NN$, let $Y_{f,\delta}$ be as in Definition \ref{def:small} and $X_{f,\delta}:=f^{-1}(Y_{f,\delta})$. 

\begin{enumerate}[label={\upshape(\roman*)}]
\item \label{dim} The morphism $f$ is semismall if and only if
\[
\dim(X\times_{Y}X)\leq \dim(Y).
\]
\item \label{dim_surj} If $f$ is semismall and surjective, then $\dim(X\times_{Y}X)=\dim(X).$
\item \label{comp} If $f$ is small and surjective, then the irreducible components of dimension $\dim(X)$ of $X\times_{Y}X$ are the closures of the irreducible components of $X_{f,0}\times_{Y_{f,0}}X_{f,0}$ and in particular dominate $Y$.
\end{enumerate}  
\end{lemma}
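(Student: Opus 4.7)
The plan is to reduce everything to a single dimension formula:
\[
\dim(X\times_Y X)=\max_{\delta\in\NN}\bigl(\dim Y_{f,\delta}+2\delta\bigr).
\]
This formula follows from the locally closed set-theoretic stratification $X\times_Y X=\bigsqcup_{\delta\geq 0} X_{f,\delta}\times_{Y_{f,\delta}} X_{f,\delta}$ together with the fact that the projection $X_{f,\delta}\times_{Y_{f,\delta}} X_{f,\delta}\to Y_{f,\delta}$ is proper and all of its (geometric) fibers have dimension exactly $2\delta$; hence its source has dimension $\dim Y_{f,\delta}+2\delta$ by the standard dimension formula for proper surjective morphisms with equidimensional fibers.

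Granting this, part \ref{dim} is immediate: the inequality $\dim Y_{f,\delta}+2\delta\leq \dim Y$ is just a rewriting of $\codim_Y Y_{f,\delta}\geq 2\delta$, and semismallness asks this for every $\delta$.

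For part \ref{dim_surj}, I first show that $f$ proper surjective and semismall forces $\dim X=\dim Y$. Choose a maximal dimensional irreducible component $Y'$ of $Y$; its generic point lies in some stratum $Y_{f,\delta}$ of codimension $0$, and semismallness gives $2\delta\leq 0$, so $\delta=0$. Hence $Y_{f,0}$ is a dense open of any maximal component, so $\dim Y_{f,0}=\dim Y$. Conversely, given a maximal dimensional component $X'$ of $X$ mapping onto $Y'\subset Y$, the generic point of $Y'$ lies in some $Y_{f,\delta}$ with $\delta\geq \dim X'-\dim Y'$, and semismallness applied to that stratum yields $2(\dim X-\dim Y')\leq \dim Y-\dim Y'$, i.e.\ $\dim X\leq (\dim Y+\dim Y')/2\leq \dim Y$; surjectivity gives the reverse inequality. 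Combining $\dim X=\dim Y$ with part \ref{dim} gives $\dim(X\times_Y X)\leq \dim X$, while the $\delta=0$ term of the formula gives the opposite inequality $\dim(X\times_Y X)\geq \dim Y_{f,0}=\dim Y=\dim X$.

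For part \ref{comp}, smallness upgrades the previous inequality to $\dim Y_{f,\delta}+2\delta<\dim Y=\dim X$ for all $\delta>0$, so every stratum $X_{f,\delta}\times_{Y_{f,\delta}} X_{f,\delta}$ with $\delta>0$ has dimension strictly less than $\dim X$. Therefore any top-dimensional irreducible component of $X\times_Y X$ must be the closure of a top-dimensional irreducible component of $X_{f,0}\times_{Y_{f,0}} X_{f,0}$. Finally, $X_{f,0}\to Y_{f,0}$ is proper and quasi-finite, hence finite, and $Y_{f,0}$ is dense in $Y$ by part \ref{dim_surj}, so each top-dimensional component of $X_{f,0}\times_{Y_{f,0}} X_{f,0}$ must dominate $Y_{f,0}$ (and therefore $Y$), else by finiteness its image, and hence itself, would have dimension $<\dim Y_{f,0}=\dim X$.

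The main technical point is justifying the dimension formula above in the non-irreducible, non-equidimensional setting, and in particular ensuring the comparison of $\dim X$ with $\dim Y$ in part \ref{dim_surj} really uses semismallness separately on each component rather than only on the generic stratum; once this is handled cleanly, parts \ref{dim_surj} and \ref{comp} drop out of the formula together with the elementary geometry of finite surjections.
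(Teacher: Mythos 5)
The paper does not prove this lemma: it is cited directly from de Cataldo--Migliorini (\cite{dCM_small}), so there is no in-paper argument to compare against. Your proof via the stratification $X\times_Y X=\bigsqcup_\delta X_{f,\delta}\times_{Y_{f,\delta}}X_{f,\delta}$ and the resulting dimension formula $\dim(X\times_Y X)=\max_\delta(\dim Y_{f,\delta}+2\delta)$ is the standard route and is essentially correct.

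Two points deserve a little more care. First, for the lower bound in the dimension formula you invoke ``the standard dimension formula for proper surjective morphisms with equidimensional fibers''; the inequality $\dim W\leq \dim Z+e$ is automatic, but the reverse inequality when $W$ is reducible requires a short argument (over a top-dimensional irreducible component $Z'$ of $Z$, some irreducible component $W'$ of $W$ dominates $Z'$, and one checks the maximum of the generic fiber dimensions of the dominating components equals $e$). Second, the paper's Definition~\ref{def:small} is phrased via $\codim_Y$ rather than $\dim Y-\dim Y_{f,\delta}$, and these agree only when every component of $Y_{f,\delta}$ lies in a top-dimensional component of $Y$ (in particular when $Y$ is equidimensional, which is implicit in \cite{dCM_small} and always holds in the applications here since $Y$ is smooth and connected). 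For arbitrary reducible $Y$ the ``only if'' direction of \ref{dim} as you've written it would break down. Apart from these two caveats, which are standard background issues rather than conceptual gaps, parts \ref{dim_surj} and \ref{comp} follow from your formula exactly as you describe.
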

%\begin{proof}
%We have $Y=\coprod_{\delta\geq 0}Y_{f,\delta}$, hence
%  \[ X\times_{Y}X=\coprod_{\delta\geq 0} X_{f,\delta}\times_{Y_{f,\delta}} X_{f,\delta}.\]
%We have
%\[\dim(X_{f,\delta}\times_{Y_{f,\delta}} X_{f,\delta})=2(\delta+\dim(Y_{f,\delta}))-\dim(Y_{f,\delta})=2\delta +\dim(Y_{f,\delta}).\]
% This implies that $f$ is semismall if and only if $\dim(X\times_{Y}X)\leq \dim(Y)$. If $f$ is surjective, then $Y_{f,0}$ is open in $Y$ hence of dimension $\dim(Y)$, which since $f$ is surjective and generically finite equals $\dim(X)$, and we get the equality. If $f$ is moreover small, then for $\delta>0$ the pieces are of strictly smaller dimension. This implies that the irreducible components of $X\times_Y X$ of dimension $\dim(X)$ are the closures of the irreducible components of $X_{f,0}\times_{Y_{f,0}}X_{f,0}$.
%\end{proof}

\subsection{Endomorphisms of motives of small maps}

Given a morphism of schemes $f:X\ra Y$, we denote by $\Aut_Y(X)$ the group of automorphisms of $X$ as a $Y$-scheme. For a $k$-scheme $X$ and an integer $i\in\NN$, we denote by $Z_i(X)$ the group of $i$-dimensional cycles with rational coefficients on $X$, and $\CH_i(X)$ the $i$-th Chow group, i.e., the quotient of $Z_i(X)$ by rational equivalence.

\begin{prop}\label{prop:relative_action}
  Let $f:X\ra Y$ be a proper morphism with $X$ smooth equidimensional of dimension $d\in\NN$. Then there exist an isomorphism
  \[
\phi_{f}:\CH_{d}(X\times_{Y}X)\simeq \End_{\DM(Y)}(f_{*}\QQ_{X})
\]
such that, if $e:U\hookrightarrow Y$ is a \'etale morphism and $\tilde{e}:V\hookrightarrow X$ is its base change along $f$ and $\tilde{f}:V\ra U$ the base change of $f$ along $e$, we have a commutative diagram
  \[
    \xymatrix{
      \CH_d(X\times_Y X) \ar[d]^{(\tilde{e}\times\tilde{e})^*} \ar[r]_{\phi_f} & \End_{\DM(Y)}(f_{*}\QQ_X) \ar[d]^{e^*} \\
      \CH_d(V\times_U V) \ar[r]_{\phi_{\tilde{f}}} & \End_{\DM(U)}(\tilde{f}_{*}\QQ_{V}).
    }
  \]

\end{prop}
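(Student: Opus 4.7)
The plan is to construct the isomorphism $\phi_{f}$ by a chain of canonical isomorphisms coming from the six operations formalism, using that $f$ is proper (so $f_{*}\simeq f_{!}$), $X$ is smooth equidimensional of dimension $d$ (so purity gives $\pi_{X}^{!}\QQ_{k}\simeq \QQ_{X}\{d\}$), and the identification of Chow groups as Hom groups in $\DM$. The étale compatibility will then follow from the naturality of each isomorphism in the chain.

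First I would write, using $f_{*}\simeq f_{!}$ and the $(f^{*},f_{*})$-adjunction,
\[
\End_{\DM(Y)}(f_{*}\QQ_{X})=\Hom_{\DM(Y)}(f_{!}\QQ_{X},f_{*}\QQ_{X})\simeq \Hom_{\DM(X)}(f^{*}f_{!}\QQ_{X},\QQ_{X}).
\]
Applying proper base change to the cartesian square with projections $p_{1},p_{2}:X\times_{Y}X\ra X$ yields $f^{*}f_{!}\QQ_{X}\simeq p_{1!}p_{2}^{*}\QQ_{X}=p_{1!}\QQ_{X\times_{Y}X}$. The $(p_{1!},p_{1}^{!})$-adjunction then gives
\[
\End_{\DM(Y)}(f_{*}\QQ_{X})\simeq \Hom_{\DM(X\times_{Y}X)}(\QQ_{X\times_{Y}X},p_{1}^{!}\QQ_{X}).
\]
Since $X$ is smooth of pure dimension $d$, purity identifies $\pi_{X}^{!}\QQ_{k}\simeq \QQ_{X}\{d\}$, hence $p_{1}^{!}\QQ_{X}\simeq p_{1}^{!}\pi_{X}^{!}\QQ_{k}\{-d\}=\pi_{X\times_{Y}X}^{!}\QQ_{k}\{-d\}$. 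Combining this with the standard description
\[
\CH_{d}(Z)\simeq \Hom_{\DM(Z)}\bigl(\QQ_{Z},\pi_{Z}^{!}\QQ_{k}\{-d\}\bigr)
\]
(valid for any separated $k$-scheme $Z$ of finite type, obtained from $M^{c}(Z)\simeq \pi_{Z*}\pi_{Z}^{!}\QQ_{k}$ by $(\pi_{Z}^{*},\pi_{Z*})$-adjunction) applied to $Z=X\times_{Y}X$ produces the desired isomorphism $\phi_{f}$.

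For the compatibility with an étale morphism $e:U\ra Y$, I would check that each step of the chain above is natural with respect to the étale pullback functor $e^{*}$ (and its base changes $\tilde{e}^{*}$, $(\tilde{e}\times\tilde{e})^{*}$). Concretely: the two adjunctions are natural in étale restriction of the base; proper base change, applied to the étale map $e$, gives $e^{*}f_{*}\simeq \tilde{f}_{*}\tilde{e}^{*}$ and a compatible isomorphism on the larger diagram involving $p_{1},p_{2}$; and purity for $X$ restricts to purity for the étale open $V\subset X$. On the Chow-group side, the pullback $(\tilde{e}\times\tilde{e})^{*}$ under the identification with Borel--Moore motivic homology is exactly the map induced by $\tilde{e}\times\tilde{e}:V\times_{U}V\hookrightarrow X\times_{Y}X$ on $\pi^{!}\QQ_{k}$, using $(\tilde{e}\times\tilde{e})^{*}\simeq (\tilde{e}\times\tilde{e})^{!}$ for étale maps.

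The main obstacle is bookkeeping: ensuring that the many canonical isomorphisms (two adjunctions, proper base change, purity, the Chow-group identification) assemble into a single commutative diagram compatible with étale restriction. This is a routine but careful exercise in the six functors formalism. No step requires any new input beyond what is already summarised in \cite[\S 3]{Ayoub_Survey} and the Background section above; the only reason the proof is non-trivial is that one must commit to a normalisation of purity and track the induced action on cycles. Once this is done, the commutativity of the diagram in the statement reduces to the naturality of the base change and purity isomorphisms with respect to the étale map $\tilde{e}\times\tilde{e}$.
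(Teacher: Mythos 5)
Your proof is correct and takes essentially the same route as the paper: both build $\phi_f$ from a chain of six-functor isomorphisms passing through the common intermediate $\Hom_{\DM(X\times_Y X)}(\QQ_{X\times_Y X},p_1^!\QQ_X)$, using the $M^c$ description of $\CH_d$, purity for the smooth $\pi_X$, proper base change for the fibre square of $f$ with itself, and the standard adjunctions, and both reduce the \'etale compatibility to a naturality check together with $e^!\simeq e^*$. The only difference is cosmetic: you traverse the chain from the $\End$ side using the $(f^*,f_*)$- and $(p_{1!},p_1^!)$-adjunctions with $f^*f_!\simeq p_{1!}p_2^*$, whereas the paper starts from the Chow side and uses the $(p_1^*,p_{1*})$- and $(f_!,f^!)$-adjunctions.
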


\begin{proof}
Write $p_1,p_2:X\times_Y X\ra X$ for the two projections. For a $k$-scheme $Z$, write $\pi_Z:Z\ra \Spec(k)$ for its structure map. We start with the isomorphism 
  \[
\CH_d(X\times_Y X)\simeq \Hom_{\DM(k)}(\QQ\{d\},M^c(X\times_Y X))\simeq \Hom_{\DM(X\times_Y X)}(\QQ_{X\times_Y X},\pi^!_{X\times_Y X}\QQ\{-d\})
\]
where we have used the description of Chow groups for general varieties in $\DM(k)$, the formula for $M^c$ in terms of the six operations and the adjunction $(\pi_{X\times_Y X}^*,\pi_{X\times_Y X *})$. We then write
  \begin{flalign*}
    \Hom_{\DM(X\times_Y X)}(\QQ_{X\times_Y X},\pi^!_{X\times_Y X}\QQ_k\{-d\}) & \simeq  \Hom_{\DM(X\times_Y X)}(\QQ_{X\times_Y X},p_1^!\pi_{X}^{!}\QQ_k\{-d\}) \\
                                                                       & \simeq  \Hom_{\DM(X\times_Y X)}(\QQ_{X\times_Y X}, p_1^!\QQ_X) \\
                                                                       & \simeq  \Hom_{\DM(X)}(\QQ_X,p_{1*}p_1^!\QQ_{X}) \\
                                                                       & \simeq  \Hom_{\DM(X)}(\QQ_{X},f^{!}f_{*}\QQ_{X}) \\
    & \simeq  \End_{\DM(Y)}(f_{*}\QQ_{X})
\end{flalign*}    
where the first isomorphism follows from $\pi_{X\times_{Y}X}=\pi_{X} \circ p_{1}$, the second follows from relative purity for the smooth morphism $\pi_{X}$, the third is the adjunction $(p_{1}^{*},p_{1*})$, the fourth is proper base change and the fifth uses the adjunction $(f_{!},f^{!})$ and the properness of $f$.

The isomorphism $\phi_f$ is defined as the composition of the sequence of isomorphisms above. Its compatibility with pullback by an \'etale morphism $e$ is a matter of carefully going through the construction and using the natural isomorphism $e^!\simeq e^*$ and proper base change.
\end{proof}

\begin{rmk}
Since the target of $\phi_f$ is clearly a $\QQ$-algebra, the proposition endows $\CH_d(X\times_Y X)$ with a $\QQ$-algebra structure. The multiplication can be described using refined Gysin morphisms, but we will not need this.
\end{rmk}  

\begin{prop}\label{prop:end_ring_iso}
  Let $f:X\ra Y$ be a surjective proper small morphism with $X$ and $Y$ smooth varieties. Let $f^\circ:X^\circ\ra Y^\circ$ be the restriction of $f$ to the locus with finite fibers, and $j:Y^\circ\ra Y$ the corresponding open immersion. Then the natural map
  \[
j^*:\End_{\DM(Y)}(f_!f^!\QQ_Y)\ra \End_{\DM(Y^\circ)}(f^\circ_!f^{\circ !}\QQ_{Y^\circ})
 \]
is an isomorphism of rings.
\end{prop}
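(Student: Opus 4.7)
The plan is to reduce the statement to a question about top-dimensional Chow groups of the fibered products via Proposition \ref{prop:relative_action}, and then to read off the result from the dimensional estimates in Lemma \ref{lem:small_dim}. To set things up, I first observe that surjectivity combined with semi-smallness (Lemma \ref{lem:small_dim}(i)--(ii)) forces $\dim X=\dim Y =: d$. Together with the smoothness of $X$ and $Y$, this equality gives a canonical identification $f^!\QQ_Y\simeq \QQ_X$ by factoring $\pi_X = \pi_Y \circ f$ and using absolute purity for $\pi_X$ and $\pi_Y$; combined with properness ($f_!\simeq f_*$), this yields $f_!f^!\QQ_Y\simeq f_*\QQ_X$ and similarly $f^\circ_!f^{\circ!}\QQ_{Y^\circ}\simeq f^\circ_*\QQ_{X^\circ}$.

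Applying Proposition \ref{prop:relative_action} to $f$ and $f^\circ$ and using its compatibility with pullback along the open (hence \'etale) immersion $j:Y^\circ \hookrightarrow Y$, it suffices to show that the restriction of cycles
\[
(\tilde{j}\times\tilde{j})^* : \CH_d(X\times_Y X) \ra \CH_d(X^\circ\times_{Y^\circ}X^\circ)
\]
is bijective; the ring structures on both Chow groups are inherited from the endomorphism algebras via $\phi_f$ and $\phi_{f^\circ}$, so this bijection will automatically be a ring isomorphism. Now both source and target are top-degree Chow groups of $d$-dimensional schemes --- on the left by Lemma \ref{lem:small_dim}(ii), on the right because $f^\circ$ is finite over the $d$-dimensional open $X^\circ$ --- and in top degree rational equivalence is trivial. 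Hence each Chow group is the free $\QQ$-vector space on the $d$-dimensional irreducible components of the corresponding scheme. By Lemma \ref{lem:small_dim}(iii), Zariski closure induces a bijection between the $d$-dimensional components of $X^\circ\times_{Y^\circ}X^\circ$ and those of $X\times_Y X$, and $(\tilde{j}\times\tilde{j})^*$ is the inverse of this bijection on generators.

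The step that most crucially uses the hypotheses is this last bijection on top-dimensional components. Semi-smallness alone would give $\dim(X\times_Y X)=d$, but it would permit extra $d$-dimensional components lying over $Y\setminus Y^\circ$, which would then destroy injectivity of $(\tilde{j}\times\tilde{j})^*$; it is precisely smallness that rules this out, via Lemma \ref{lem:small_dim}(iii). Once that geometric input is in place, the remainder of the argument is a formal manipulation with the six operations together with the isomorphism of Proposition \ref{prop:relative_action}.
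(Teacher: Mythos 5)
Your argument is correct and follows essentially the same strategy as the paper: reduce, via the isomorphism $\phi$ of Proposition \ref{prop:relative_action} and its compatibility with \'etale pullback, to the bijectivity of the restriction map on top-dimensional Chow groups of the fibre products, and then read off that bijectivity from Lemma \ref{lem:small_dim}\ref{dim_surj}--\ref{comp}. The only cosmetic difference is that the paper spells out the construction of $j^*$ in terms of base change and $(-)^{!}$ compatibilities, which you take as given.
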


\begin{proof}
First, let us explain how $j^*$ is defined. Write $\tilde{\jmath}:X^\circ\ra X$. Then we have
\[j^*f_!f^!\QQ_Y\simeq f^\circ_!\tilde{\jmath}^*f^!\QQ_Y\simeq f^\circ_!\tilde{\jmath}^!f^!\QQ_Y\simeq f^\circ_!(f^\circ)^!j^!\QQ_Y\simeq f^\circ_!(f^\circ)^!j^*\QQ_Y\simeq f^\circ_!(f^\circ)^!\QQ_{Y^\circ}\]
where we have used proper base change, compatibility of $(-)^{!}$ with composition and the fact that $e^!\simeq e^*$ for $e$ \'etale. Then $j^*$ is defined as
\[
\End_{\DM(Y)}(f_!f^!\QQ_Y)\stackrel{j^*}{\ra}\End_{\DM(Y)}(j^*f_!f^!\QQ_Y) \simeq \End_{\DM(Y^\circ)}(f^\circ_!f^{\circ !}\QQ_{Y^\circ}).
\]
The map $j^*$ is clearly compatible with addition and composition, hence is a homomorphism of rings. It remains to show that it is bijective.

Since $X$ and $Y$ are both smooth of dimension $d$ over $k$, we can use purity isomorphisms to obtain an isomorphism
\begin{equation}
\label{f_purity}
f^{!}\QQ_{Y}\simeq f^{!}\pi_{Y}^{!}\QQ_{k}\{-d\}\simeq \pi_{X}^{!}\QQ_{k}\{-d\}\simeq \QQ_{X}.
\end{equation}
We deduce that $f_*\QQ_X\simeq f_*f^!\QQ_Y\simeq f_!f^!\QQ_Y$, and similarly that $f^\circ_*\QQ_{X^\circ}\simeq f^\circ_!f^{\circ !}\QQ_{Y^\circ}$. These two isomorphisms are compatible with restriction along $j$. Combining this observation with Proposition \ref{prop:relative_action}, we have the commutative diagram with horizontal isomorphisms

\[
\xymatrix{  
\CH_d(X\times_{Y} X) \ar[d]_{(j\times j)^*} \ar[r]^{\sim}_{\phi_{f}} & \End_{\DM(Y)}(f_*\QQ_X) \ar[d]_{j^*} \ar[r]^{\sim} & \End_{\DM(Y)}(f_!f^{ !}\QQ_{Y}) \ar[d]_{j^*} \\    
\CH_d(X^\circ \times_{Y^\circ} X^\circ) \ar[r]^{\sim}_{\phi_{f^\circ}} & \End_{\DM(Y^\circ)}(f^\circ_*\QQ_{X^\circ}) \ar[r]^{\sim} & \End_{\DM(Y^\circ)}(f^\circ_!f^{\circ !}\QQ_{Y^\circ}).
}
  \]
On a variety of dimension $d$, we have $\CH_d=Z_d$, i.e., rational equivalence is trivial on top-dimensional cycles. By Lemma \ref{lem:small_dim} \ref{dim_surj}, this implies $\CH_d(X\times_Y X)\simeq Z_d(X\times_Y X)$ and also $\CH_d(X\times_Y X)\simeq Z_d(X^\circ\times_{Y^\circ}X^\circ)$. By Lemma \ref{lem:small_dim} \ref{comp}, the restriction morphism $Z_d(X\times_Y X)\ra Z_d(X^\circ\times_{Y^\circ} X^\circ)$ is a bijection. We deduce that the left vertical map in the diagram above is a bijection, and conclude that the right vertical map is a bijection.
\end{proof}  

\begin{lemma}\label{lem:psi}
Let $f:X\ra Y$ be a finite type separated morphism with $Y$ smooth. Then there exist an morphism of $\QQ$-algebras
\[
  \psi_f:\End_{\DM(Y)}(f_!f^!\QQ_Y)\ra \End_{\DM(k)}(M(X))
\]
such that, for $e:U\hookrightarrow Y$ an \'etale morphism, $\tilde{e}:V\hookrightarrow X$ its base change along $f$ and $\tilde{f}:V\ra U$ the base change of $f$ along $e$, we have a commutative diagram
  \[
    \xymatrix{
\End_{\DM(Y)}(f_!f^!\QQ_Y) \ar[d]^{e^*} \ar[r]_{\psi_f} & \End_{\DM(k)}(M(X)) \ar[d]^{e^*} \\
\End_{\DM(U)}(\tilde{f}_!\tilde{f}^!\QQ_U) \ar[r]_{\psi_{\tilde{f}}} & \End_{\DM(k)}(M(V)).
    }
  \]
\end{lemma}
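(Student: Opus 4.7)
The plan is to construct $\psi_f$ directly from the six-functor description of $M(X)$ combined with the purity isomorphism for the smooth base $Y$. Working componentwise on $Y$ (so that each component has a well-defined dimension $d_Y$), purity gives a canonical isomorphism $\pi_Y^!\QQ_k \simeq \QQ_Y\{d_Y\}$. Combining this with $\pi_X = \pi_Y\circ f$, so that $\pi_X^! = f^!\pi_Y^!$ and $\pi_{X!} = \pi_{Y!}\circ f_!$, I obtain a canonical isomorphism
\[
M(X) = \pi_{X!}\pi_X^!\QQ_k \simeq \pi_{Y!}(f_!f^!\QQ_Y)\{d_Y\}.
\]
I then define $\psi_f$ as the composition
\[
\End_{\DM(Y)}(f_!f^!\QQ_Y) \xrightarrow{\pi_{Y!}} \End_{\DM(k)}(\pi_{Y!}(f_!f^!\QQ_Y)) \xrightarrow{\sim} \End_{\DM(k)}(M(X)),
\]
where the first arrow is given by functoriality of the triangulated functor $\pi_{Y!}$ and the second by the displayed isomorphism (since tensoring with $\QQ\{d_Y\}$ is an autoequivalence of $\DM(k)$). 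This is a $\QQ$-algebra morphism because $\pi_{Y!}$ is a $\QQ$-linear triangulated functor, so it preserves identities, composition, and the additive structure.

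For the compatibility square, since $e$ is étale the variety $U$ is smooth of dimension $d_Y$, and the same construction gives $M(V) \simeq \pi_{U!}(\tilde{f}_!\tilde{f}^!\QQ_U)\{d_Y\}$. The LHS vertical arrow $e^*$ is induced by the isomorphism $e^*(f_!f^!\QQ_Y) \simeq \tilde{f}_!\tilde{f}^!\QQ_U$ already used in Proposition \ref{prop:end_ring_iso}, itself a chain of proper base change and the identity $e^!\simeq e^*$. The RHS vertical arrow is defined by the analogous functorial construction applied to the identifications $M(X) \simeq \pi_{Y!}(f_!f^!\QQ_Y)\{d_Y\}$ and $M(V) \simeq \pi_{U!}(\tilde{f}_!\tilde{f}^!\QQ_U)\{d_Y\}$, using the naturality of purity under étale pullback and of proper base change. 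The commutativity of the square then reduces to verifying coherence of these natural isomorphisms, a formal consequence of the $(2)$-functoriality of the six operations as recorded in \cite[\S 3]{Ayoub_Survey}.

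The main obstacle is bookkeeping rather than conceptual: one must carefully track the natural isomorphisms (purity, proper base change, and the unit/counit for the $(e_!,e^*)$-adjunction with $e^!\simeq e^*$) to ensure they splice together in the correct order, as already done in the construction of $\phi_f$ in Proposition \ref{prop:relative_action} and in the identification $f_*\QQ_X \simeq f_!f^!\QQ_Y$ appearing in the proof of Proposition \ref{prop:end_ring_iso}. No further ingredients beyond the standard compatibilities of the six-functor formalism are needed.
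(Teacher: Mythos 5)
Your construction of $\psi_f$ via applying $\pi_{Y!}$ to endomorphisms of $f_!f^!\QQ_Y$, twisting by $\{d_Y\}$, and transporting along the purity-induced identification $M(X)\simeq \pi_{Y!}(f_!f^!\QQ_Y)\{d_Y\}$ is exactly the paper's argument, as is the reduction of the compatibility square to naturality of purity, proper base change, and $e^!\simeq e^*$ for $e$ \'etale. Correct, and the same route.
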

\begin{proof}
Recall that, for $Z$ a smooth variety of dimension $e$ over $k$, we have a canonical purity isomorphism $\pi_{Z}^{!}\QQ_{k}\simeq \QQ_{Z}\{e\}$. By working with each connected component of $Y$ separately, we can assume that $Y$ is equidimensional of dimension $d$. We deduce that
  \[
M(X):= \pi_{X!}\pi^{!}_{X}\QQ_{k}\simeq \pi_{Y!}f_{!}f^{!}\pi_{Y}^{!}\QQ_{k}\simeq \pi_{Y!}f_{!}f^!\QQ_{Y}\{d\}
\]
by using the purity isomorphism for the smooth morphism $\pi_Y$. We define $\psi_f$ as the composition
\[\End_{\DM(Y)}(f_!f^!\QQ_Y)\stackrel{\pi_{Y!}(-)\{d\}}{\lra} \End_{\DM(k)}(\pi_{Y!}f_!f^!\QQ_Y\{d\})\simeq \End_{\DM(k)}M(X).\]
The compatibility with pullbacks by \'etale morphisms follows again easily from the natural isomorphism $e^!\simeq e^*$ for an \'etale morphism $e$.
\end{proof}

% \begin{defn}\label{def:refined_composition}
% Let $X$ be a smooth irreducible variety of dimension $d$ and $f:X\ra Y$ be a proper morphism. The diagonal map $\Delta:X\ra X\times_Y X$ is lci by REF. Define the refined composition of correspondences on $X\times_Y X$ as the map
% \[
% \CH_d(X\times_Y X)\times \CH_d(X\times_Y X)\ra \CH_d(X\times_Y X), (\alpha,\beta)\mapsto \alpha\circ \beta:=p_{13*}(\id\times \Delta\times \id)^!(\alpha\times\beta)
% \]
% where
% \[
% (\id\times\Delta\times\id)^!:\CH_d(X\times_Y X\times_Y X\times_Y X)\ra \CH_d(X\times_Y X\times_Y X) 
% \]
% is the refined Gysin map of REF, and $p_{13}:X\times_Y X\times_Y X$ is the projection onto the first and third factor.
% \end{defn}

\subsection{Group actions on motives of small maps} 

Let $S$ be a scheme, $M\in\DM(S)$ a motive and $G$ a group. An action of $G$ on $M$ is a morphism of groups $a:G\ra \Aut_{\DM(S)}(M)$.
In particular, given a morphism $f:X\ra Y$, we have an action $\Aut_Y(X)\ra \Aut_{\DM(Y)}(f_!f^!\QQ_Y)$.

Assuming further that $G$ is finite, let
\[
\Pi_{a}:=\frac{1}{|G|}\sum_{g\in G}a(g)\in \End_{\DM(S)}(M)
  \]
  which makes sense since $\DM(S)$ is $\QQ$-linear. Then $\Pi_{a}$ is idempotent, and since $\DM(S)$ is idempotent-complete we define the invariant motive $M^{G}\in\DM(S)$ as the image of $\Pi_{a}$.

\begin{ex}\label{ex:sym}
An important example for this paper are motives of symmetric products. For a quasi-projective variety $X$ over $k$ and $n\in\NN$, we have a morphism $f:X^n\ra \Sym^n(X)$. The symmetric group $S_n$ acts on $X^n$ over $\Sym^n(X)$, so that we get an induced action on $M(X^n)$ such that $M(f):M(X^n)\ra M(\Sym^n(X))$ factors via $M(X^n)^{S_n}\ra M(\Sym^n(X))$. Since $S_n$ acts transitively on the geometric fibers of $f$, this second morphism is an isomorphism $M(X^n)^{S_n}\simeq M(\Sym^n(X))$ by \cite[Corollaire 2.1.166]{Ayoub_these_1}.
\end{ex}

The main result of this section is a generalisation of the previous example where we do not have a global action on $X$ and $f$ is not necessarily finite but only small.
  
% \begin{lemma}
%   Let $X,Y$ be varieties over $k$ with $Y$ smooth and $G$ a finite group. Let $f:X\ra Y$ be a principal $G$-bundle.
%   \begin{enumerate}
% \item\label{principal_compat} The natural $G$-actions on $f_!f^!\QQ_Y$ and on $M(X)$ are related via the morphism $\psi_f$ of Lemma \ref{lem:psi}.
% \item\label{principal_rel_iso} For the natural $G$-action on $f_!f^!\QQ_Y\in\DM(Y)$, the morphism $f_!f^!\QQ_Y\ra \QQ_Y$ factors through $(f_!f^!\QQ_Y)^G$ and induces an isomorphism $(f_!f^!\QQ_Y)^G\simeq \QQ_Y$.
% \item\label{principal_iso} The morphism $M(f):M(X)\ra M(Y)$ factors through $M(X)^{G}\in\DM(k)$ and induces an isomorphism $M(X)^{G}\simeq M(Y)$.
%   \end{enumerate}
% \end{lemma}
% \begin{proof}
% Statement \ref{principal_compat} follows directly from the constructions.

% For any $g\in G$, since the action on $X$ is over $Y$, $M(f)$ factors through $M(X)^G$ and the morphism $f_!f^!\QQ_Y\ra \QQ_Y$ factors through $(f_!f^!\QQ_Y)^G$. Given \ref{principal_compat}, to complete the proof it suffices to show that $(f_!f^!\QQ_Y)^G\simeq \QQ_Y$ is an isomorphism. By \'etale descent property for $\DM(Y)$, we can work \'etale locally on $Y$ and assume that $f$ is a trivial $G$-bundle, for which the result is immediate.
% \end{proof}  

\begin{thm}\label{thm:actions}
  Let $f:X\ra Y$ be a small surjective proper morphism between smooth connected varieties. Assume that the restriction $f^\circ:X^\circ\ra Y^\circ$ to the locus with finite fibers is a principal $G$-bundle. Then the action of $G$ on $M(X^\circ)$ extends to an action on $M(X)$ which induces an isomorphism $M(X)^{G}\simeq M(Y)$; we have a commutative diagram
  \[
    \xymatrix{
M(X^\circ) \ar[r] \ar[d] & M(X^\circ)^{G} \ar[r]^{\sim} \ar[d] & M(Y^\circ)\ar[d] \\
M(X) \ar[r] & M(X)^{G} \ar[r]^{\sim} & M(Y). \\
    }
    \]
\end{thm}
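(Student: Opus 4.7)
The plan is to define a $G$-action on $M(X)$ by transporting the natural action through Proposition \ref{prop:end_ring_iso} and $\psi_f$ of Lemma \ref{lem:psi}, then to identify the resulting averaging projector $\Pi:=\tfrac{1}{|G|}\sum_{g\in G}a(g)$ with the canonical idempotent $p:=\tfrac{1}{|G|}\eta\circ\epsilon$, where $\eta:\QQ_Y\to f_*\QQ_X$ is the unit of $(f^*,f_*)$ and $\epsilon:f_*\QQ_X\to\QQ_Y$ is the counit of $(f_!,f^!)$. The image of $p$ is tautologically $\QQ_Y$, and applying $\pi_{Y!}(-)\{d\}$ then yields $M(X)^G\simeq M(Y)$.

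First I would construct the action. The $G$-action on $X^\circ$ by $Y^\circ$-automorphisms produces $G\to \Aut_{\DM(Y^\circ)}(f^\circ_!f^{\circ!}\QQ_{Y^\circ})$, which lifts uniquely to $G\to \Aut_{\DM(Y)}(f_!f^!\QQ_Y)$ by Proposition \ref{prop:end_ring_iso}; composing with $\psi_f$ gives the action on $M(X)$, compatible with the tautological action on $M(X^\circ)$ by the naturality in those two results. I would then set up $p$: since $X$ and $Y$ are smooth and connected of the same dimension $d$ (as $f$ is generically finite), purity gives $f^!\QQ_Y\simeq \QQ_X$ and so $f_!f^!\QQ_Y\simeq f_*\QQ_X$, which defines $\eta$ and $\epsilon$. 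The composition $\epsilon\circ\eta$ is a scalar in $\End_{\DM(Y)}(\QQ_Y)\simeq \QQ$ (by smoothness and connectedness of $Y$), and restricts to $|G|$ on $Y^\circ$ by the usual trace-pullback identity for the étale $G$-torsor $f^\circ$; hence $\epsilon\circ\eta=|G|\cdot\id_{\QQ_Y}$ and $p$ is an idempotent with image $\QQ_Y$ (as $\eta/|G|$ is a section of $\epsilon$).

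The heart of the argument is the equality $p=\Pi$. By Proposition \ref{prop:end_ring_iso} it suffices to check $p^\circ=\Pi^\circ$ in $\End_{\DM(Y^\circ)}(f^\circ_*\QQ_{X^\circ})$. Via Proposition \ref{prop:relative_action} combined with the decomposition $X^\circ\times_{Y^\circ}X^\circ=\bigsqcup_g \Gamma_g$ of the $G$-torsor (whose graph components are precisely the top-dimensional irreducible pieces, by Lemma \ref{lem:small_dim}), this endomorphism ring is identified with the group algebra $\QQ[G]$, with $a(g)$ corresponding to $g$. Under this identification $\Pi^\circ=\tfrac{1}{|G|}\sum_g g$ is the central idempotent cutting out the trivial representation; the same holds for $p^\circ$, since $\eta^\circ$ and $\epsilon^\circ$ are $G$-invariant (via the adjunction identifications with $\End_{\DM(X^\circ)}(\QQ_{X^\circ})\simeq \QQ$, under which the unit and counit correspond to the fundamental class $[X^\circ]$, which is preserved by the correspondence action $[\Gamma_g]$), hence $\eta^\circ\epsilon^\circ$ is bi-$G$-invariant in $\QQ[G]$ and so a scalar multiple of $\sum_g g$, with the scalar pinned down to $1$ by $(\eta^\circ\epsilon^\circ)^2=|G|\cdot\eta^\circ\epsilon^\circ$. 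Thus $p^\circ=\Pi^\circ$ and therefore $p=\Pi$ on $Y$. Applying the additive functor $\pi_{Y!}(-)\{d\}$, which sends $f_*\QQ_X$ to $M(X)$ and $\QQ_Y$ to $M(Y)$, the idempotent $\Pi$ becomes the averaging projector on $M(X)$ and its image is isomorphic to $M(Y)$; commutativity of the diagram follows from the naturality of $\eta$, $\epsilon$, $\phi_f$ and $\psi_f$ with respect to the étale restriction along $j$. The main obstacle is the equality $p=\Pi$: although both projectors manifestly cut out $\QQ_{Y^\circ}$ on the open part, their literal equality as endomorphisms rests on the group-algebra description of $\End_{\DM(Y^\circ)}(f^\circ_*\QQ_{X^\circ})$ obtained from the cycle-class machinery of the previous subsection, together with the normalization check arranged through $(\eta^\circ\epsilon^\circ)^2=|G|\cdot\eta^\circ\epsilon^\circ$.
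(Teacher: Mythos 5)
Your proof is essentially correct and follows the same high-level architecture as the paper's: lift the $G$-action via Proposition \ref{prop:end_ring_iso} and $\psi_f$; introduce the auxiliary projector $p=\tfrac{1}{|G|}\eta\epsilon$; use injectivity of $j^*$ to reduce the identity $p=\Pi$ to a check on $Y^\circ$; and exploit that $f^\circ$ is a finite \'etale Galois cover there. The difference is in the last step. The paper simply invokes \cite[Lemme 2.1.165]{Ayoub_these_1} twice -- once to get $j^*p=j^*\Pi$ and once for $\epsilon\circ\eta=|G|$ -- whereas you reconstruct the open-locus computation by hand: you identify $\End_{\DM(Y^\circ)}(f^\circ_*\QQ_{X^\circ})$ with $\QQ[G]$ via $\phi_{f^\circ}$ and the decomposition $X^\circ\times_{Y^\circ}X^\circ=\bigsqcup_g\Gamma_g$ (using Lemma \ref{lem:small_dim}), then argue by bi-$G$-invariance and idempotency in the group algebra. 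This is more self-contained but rests on an unproved compatibility, namely $\phi_{f^\circ}([\Gamma_g])=a(g)$ (together with the correspondence multiplication matching composition in $\End$), which the paper never needs and explicitly declines to establish (see the remark after Proposition \ref{prop:relative_action}); this would deserve a short verification. Two smaller points: your normalization $(\eta^\circ\epsilon^\circ)^2=|G|\eta^\circ\epsilon^\circ$ only yields $c\in\{0,1\}$, so you should add that $c=0$ is excluded by $\epsilon^\circ\eta^\circ=|G|\neq 0$; and you leave implicit that the resulting retraction $\epsilon$ realizes $M(X)^G\simeq M(Y)$ as the map induced by $M(f)$, which the paper shows by checking that the counit of $(f_!,f^!)$ is $G$-invariant via the computation of $\End_{\DM(X)}(\QQ_X)\simeq\QQ^{\pi_0(X)}$.
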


\begin{proof}
  By working separately with each connected component, we can assume $Y$ is connected, and in particular equidimensional. Write $d=\dim(X)=\dim(Y)$. Since $f^\circ:X^\circ\ra Y^\circ$ is a principal $G$-bundle, we have a morphism of groups $G\ra \Aut_{Y^\circ}(X^\circ)$. We deduce a morphism of groups $G\ra \Aut_{\DM(Y^\circ)}(f^\circ_!f^{\circ !}\QQ_{Y^\circ})$. By Proposition \ref{prop:end_ring_iso}, this yields a morphism of groups $G\ra \Aut_{\DM(Y)}(f_!f^!\QQ_Y)$. We compose with the morphism $\psi_f$ of Lemma \ref{lem:psi} and get a morphism of groups $G\ra\Aut_{\DM(k)}(M(X))$, which is the required action.

  Let us check that the morphism $M(f):M(X)\ra M(Y)$ factors through $M(X)^G$. Given its construction, it suffices to show that the counit morphism $f_!f^!\QQ_Y\ra \QQ_Y$ factors through $(f_!f^!\QQ_Y)^G$. For this, it suffices to show that, for any $g\in G$, the composition $f_!f^!\QQ_Y\stackrel{g}{\ra}f_!f^!\QQ_Y\ra \QQ_Y$ coincides with the counit of the adjunction $(f_!,f^!)$. By the same adjunction, this amounts to comparing two maps $f^!\QQ_Y\ra f^!\QQ_Y$. By equation \eqref{f_purity}, we have $f^!\QQ_Y\simeq \QQ_X$. By \cite[Proposition 11.1]{Ayoub_etale}, and using the fact that $X^\circ$ is dense in $X$, we have
  \[
\Hom_{\DM(X)}(\QQ_X,\QQ_X)\simeq \QQ^{\pi_0(X)}\hookrightarrow \QQ^{\pi_0(X^{\circ})}\simeq \Hom_{\DM(X^\circ)}(\QQ_{X^\circ},\QQ_{X^\circ})
  \]
hence we can check the required equality after restriction to $X^\circ$; that is, we must show that for any $g\in G$, the composition $f^\circ_!f^{\circ !}\QQ_{Y^\circ}\stackrel{g}{\ra}f^\circ_!f^{\circ !}\QQ_{Y^\circ}\ra \QQ_{Y^\circ}$ coincides with the counit of the adjunction $(f^\circ_!,f^{\circ !})$. This is clear since $G$ acts through $\Aut_{Y^\circ}(X^\circ)$. 

By construction, to show that the induced map $M(X)^G\ra M(Y)$ is an isomorphism, it suffices to show that the morphism $(f_! f^!\QQ_Y)^G\ra \QQ_Y$ is an isomorphism. Let $\Pi_G\in \End_{\DM(Y)}(f_!f^!\QQ_Y)$ the projector onto $(f_! f^!\QQ_Y)^G$. Since $X$ and $Y$ are smooth of the same dimension $d$, the purity isomorphisms yield an isomorphism $f^!\QQ_Y\simeq \QQ_X$ (equation \eqref{f_purity}). Moreover, this isomorphism is compatible with restriction to $Y^\circ$, in the sense that after applying $\tilde{\jmath}^!=\tilde{\jmath}^*$ for $\tilde{\jmath}:X^{\circ}\ra X$, it coincides with the simpler isomorphism $f^{\circ !}\QQ_{Y^\circ}\simeq f^{\circ *}\QQ_{Y^\circ}\simeq \QQ_{X^\circ}$ (using that $f^\circ$ is \'etale).

Consider the composition
\[
\Pi':f_!f^!\QQ_Y\stackrel{\eta_!}{\ra} \QQ_Y\stackrel{\frac{1}{|G|}}{\ra}\QQ_Y\stackrel{\epsilon_*}{\ra} f_*\QQ_X\simeq f_!f^!\QQ_Y
\]
where $\epsilon_*$ is the unit for the adjunction $(f^*,f_*)$ and $\eta_!$ is the counit for the adjunction $(f_!,f^!)$. By \cite[Lemme 2.1.165]{Ayoub_these_1}, we see that $j^{*}\Pi'$ is a projector which coincides with $j^{*}\Pi_{G}$. By the injectivity of $j^{*}$ (Proposition \ref{prop:end_ring_iso}), this implies that $\Pi'=\Pi_G$, thus $\Pi'$ is a projector, and to conclude it remains to identify the image of $\Pi'$ with the morphism $f_!f^!\QQ_Y\ra \QQ_Y$.

For this, it is clearly enough to show that the composition
\[
\QQ_Y\stackrel{\epsilon_*}{\ra} f_*\QQ_X\simeq f_!f^!\QQ_Y\stackrel{\eta_!}{\ra} \QQ_Y
\]
coincides with the multiplication by $|G|$. Since $Y$ and $Y^\circ$ are connected, by \cite[Proposition 11.1]{Ayoub_etale} we have
\[\Hom_{\DM(Y)}(\QQ_Y,\QQ_Y)\simeq \QQ\simeq \Hom_{\DM(Y^\circ)}(\QQ_{Y^\circ},\QQ_{Y^\circ})\]
hence it is enough to show this after restriction to $Y^\circ$. The corresponding composition is
\[
\QQ_{Y^\circ}\stackrel{\epsilon_*}{\ra} f^\circ_*\QQ_{X^\circ}\simeq f^\circ_!f^{\circ !}\QQ_{Y^\circ}\stackrel{\eta_!}{\ra} \QQ_{Y^\circ}
\]
which coincides with multiplication by $|G|$ by \cite[Lemme 2.1.165]{Ayoub_these_1}. 
\end{proof}

\begin{rmk}\label{rmk:functoriality}
  Consider a commutative diagram
  \[
    \xymatrix{
      X \ar[r]^{h} \ar[d]_{f} & X' \ar[d]^{f'} \\
      Y \ar[r]^{g} & Y'
    }
  \]
with $f$ and $f'$ satisfying the assumptions of Theorem \ref{thm:actions} with groups $G$, $G'$. If $g$ does not send the locus $Y^0$ into $(Y^0)'$, it is not clear how to formulate conditions which make the morphism $M(X)\ra M(X')$ equivariant with respect to some given homomorphism $G\ra G'$. However in the application in $\S$\ref{sec formula}, we have an alternative description of the actions which make a certain equivariance property clear (see Proposition \ref{prop THecke trans}).
\end{rmk}

\section{Motives of schemes of Hecke correspondences}\label{sec motives Hecke schemes}

In this section, we introduce some generalisations of the schemes of matrix divisors $\Div_{n,d}(D)$ and the flag-generalisation $\FDiv_{n,d}(D)$ and study their motives. The main result in this section is inspired by work of Laumon \cite{laumon} and Heinloth \cite{Heinloth_LaumonBDay}.

\subsection{Definitions and basic properties}

For a family $\cE$ of vector bundles on $C$ parametrised by a $k$-scheme $T$, we write $\rk(\cE) = n$ and $\deg(\cE) = d$ if the fibrewise rank and degree of this family are $n$ and $d$ respectively.

\begin{defn}
For  $l \in \NN$ and a family $\cE$ of rank $n$ degree $d$ vector bundles over $C$ parametrised by $k$-scheme $T$, we define two $T$-schemes $\Hecke^l_{\cE/T}$ and $\THecke^l_{\cE/T}$ as follows: over $g: S \ra T$, the points of these schemes are given by
\[ \Hecke^l_{\cE/T} (S):= \left\{\phi : \cF \hookrightarrow (g \times \mathrm{id}_C)^*\cE : \begin{array}{c} \cF \ra S \times C \text{ family of vector bundles on } C  \\  \rk(\cF) = n, \deg(\cF)=d-l, \rk(\phi)=n  \end{array} \right\} \]
and 
\[ \THecke^l_{\cE/T} (S):= \left\{ \cF_l \hookrightarrow \cF_{l-1}  \cdots  \hookrightarrow \cF_0 := (g \times \mathrm{id}_C)^*\cE :  \begin{array}{c} \cF_i \ra S \times C \text{ family of vector bundles}  \\  \rk(\cF_i) = n, \deg(\cF_i)=d-i \\ \rk(\cF_i \ra \cF_{i-1})=n \text{ for } i = 1, \cdots, l \end{array}  \right\}. \]
We refer to $\Hecke^l_{\cE/T}$ as the $T$-scheme of length $l$ Hecke correspondences of $\cE$ and the $\THecke^l_{\cE/T}$ as the $T$-scheme of $l$-iterated Hecke correspondences of $\cE$. 
\end{defn}

Let us first explain why these are both schemes over $T$. The scheme of length $l$ Hecke correspondences $\Hecke^l_{\cE/T}$ is the Quot scheme over $T$
\[\Hecke^l_{\cE/T} = \quot_{T \times C /T}^{(0,l)}(\cE)\]
parametrising quotients families of $\cE$ of rank $0$ and degree $l$, which is a projective $T$-scheme. Similarly $\THecke^l_{\cE/T}$ is a generalisation of Quot schemes to allow flags of arbitrary length, called a Flag-Quot or Drap scheme (see \cite[Appendix 2A]{HL}); thus $\THecke^l_{\cE/T}$ is also projective over $T$. In fact, as we are considering torsion quotients of a smooth projective curve, both $\Hecke^l_{\cE/T}$ and $\THecke^l_{\cE/T}$ are smooth $T$-schemes (see \cite[Propositions 2.2.8 and 2.A.12]{HL}). In particular, if $T/k$ is smooth (resp. projective), then both these schemes are smooth (resp.) projective over $k$.

\begin{ex}
Let $T = \spec (k)$ and $\cE = \cO_C(D)^{\oplus n}$ for a divisor $D$ on $C$; then
\[ \Hecke^{n\deg(D) - d}_{\cE/T} = \Div_{n,d}(D) \quad \text{and} \quad \THecke^{n \deg(D) -d}_{\cE/T} = \FDiv_{n,d}(D), \]
which are both smooth and projective.
\end{ex}

We introduce some notation and properties of these Hecke schemes in the following remark.

\begin{rmk}\label{rmk structure of Hecke schemes}
Let $\cE $ be a family of rank $n$ degree $d$ vector bundles over $C$ parametrised by $T$.
\begin{enumerate}[label={\upshape(\roman*)}]
\item\label{rmk Hecke 1} For $l=0$, we note that $\THecke^0_{\cE/T} = \Hecke^0_{\cE/T} = T$ and for $l = 1$, we have
\[ \THecke^1_{\cE/T} = \Hecke^1_{\cE/T} \cong \PP(\cE) \ra T \times C, \]
where this projection is given by taking the support of the family of degree 1 torsion sheaves. Indeed, an elementary modification of a vector bundle $E \ra C$ at $x \in C$ is equivalent to a surjection $E_x \twoheadrightarrow \kappa(x)$ (up to scalar multiplication). %and this also works in families.
%REF: Maruyama elementary transformations of algebraic vector bundles
%as for any $T$-scheme $h : S \ta T$, we claim the points of these schemes coincide: a morphism $S \ra  \THecke^1_{\cE/T}$ over $T$ is a family $\cF$ over 
\item\label{rmk Hecke 2} Since $\THecke^l_{\cE/T}$ is a Flag-Quot scheme there is a universal flag of vector bundles
\[ \cU^l_l \hookrightarrow \cU^l_{l-1} \hookrightarrow   \cdots \hookrightarrow  \cU^l_{1} \hookrightarrow \cU^l_0 := p_2^*\cE \]
over $\THecke^l_{\cE/T} \times_T (T \times C) \cong \THecke^l_{\cE/T} \times C$. 
In fact, Flag-Quot schemes, and in particular schemes of iterated Hecke correspondences, are constructed as iterated relative Quot schemes. More precisely, we have 
\[ \pi_l: \THecke^l_{\cE/T} \cong \Hecke^1_{\cU^{l-1}_{l-1}/\THecke^{l-1}_{\cE/T}} \cong \PP(\cU^{l-1}_{l-1}) \ra \THecke^{l-1}_{\cE/T} \times_T (T \times C) \cong \THecke^{l-1}_{\cE/T} \times C. \]
where $\pi_l(\cF_l \subsetneq \cF_{l-1} \subsetneq \cdots \subsetneq \cF_0) := (\cF_{l-1} \subsetneq \cdots \subsetneq \cF_0, \supp(\cF_{l-1}/\cF_l))$
\item\label{rmk Hecke 3} There is a map $P_l :  \THecke^l_{\cE/T}  \ra T \times C^l$ obtained by composing the maps $\pi_j$ for $ 1 \leq j \leq l$
\[ \quad \quad \quad P_{l} :  \THecke^l_{\cE/T} \ra \THecke^{l-1}_{\cE/T} \times_T (T \times C) \ra \THecke^{l-2}_{\cE/T} \times_T (T \times C)^{\times_T \: 2} \cdots \ra T \times_T (T \times C)^{\times_T \: l} .\]
Explicitly, we have $P_l(\cF_l \subsetneq \cF_{l-1} \subsetneq \cdots \subsetneq \cF_0) = (\supp(\cF_{0}/\cF_1),\ldots , \supp(\cF_{l-1}/\cF_l))$. 
\item\label{rmk Hecke 4} For $1 \leq j \leq l$, we let $\pr_{j}^l :  \THecke^l_{\cE/T} \ra T \times C$ denote the composition of $P_{l}$ with the projection onto the $j$th copy of $T \times C$; that is,
\[ \pr_{j}^l(\cF_l \subsetneq \cF_{l-1} \subsetneq \cdots \subsetneq \cF_0) = \supp(\cF_{j-1}/\cF_j).\]
\item\label{rmk Hecke 5} Let $p_l : \THecke^l_{\cE/T}  \ra \THecke^{l-1}_{\cE/T} $ denote the composition of $\pi_l$ with the projection to the first factor; then for $1 \leq j \leq l-1$, we have $(p_l \times \mathrm{id}_C)^*\cU_{j}^{l-1} = \cU_j^l$.
\end{enumerate}
\end{rmk}

\begin{lemma}\label{lemma it proj bdle}
Let $\cE$ be a family of rank $n$ degree $d$ vector bundles over $C$ parametrised by a scheme $T$; then the scheme $\THecke^l_{\cE/T}$ is an $l$-iterated $\PP^{n-1}$-bundle over $T \times C^l$. More precisely, we have the following sequence of projective bundles
\[ \THecke^l_{\cE/T} \cong \PP(\cU^{l-1}_{l-1}) \ra \THecke^{l-1}_{\cE/T} \times C \cong \PP(\cU^{l-2}_{l-2}) \times C \ra \cdots \ra \THecke^1_{\cE/T}\times C^{l-1} \cong \PP(\cE) \times C^{l-1} \ra T \times C^l.  \]
\end{lemma}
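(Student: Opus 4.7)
My plan is a straightforward induction on $l$, with the heavy lifting already packaged into Remark \ref{rmk structure of Hecke schemes}. For $l = 0$ the claim is trivially true since $\THecke^0_{\cE/T} = T$, and for $l = 1$ we invoke Remark \ref{rmk structure of Hecke schemes}\ref{rmk Hecke 1}: the identification $\THecke^1_{\cE/T} \cong \PP(\cE) \to T \times C$ together with the fact that $\cE$ has fibrewise rank $n$ gives a $\PP^{n-1}$-bundle.

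For the inductive step, I would assume that we have built a tower
\[ \THecke^{l-1}_{\cE/T} \cong \PP(\cU^{l-2}_{l-2}) \to \THecke^{l-2}_{\cE/T} \times C \to \cdots \to T \times C^{l-1} \]
of projective bundles of relative dimension $n-1$. Then I would apply Remark \ref{rmk structure of Hecke schemes}\ref{rmk Hecke 2}, which identifies
\[ \THecke^l_{\cE/T} \cong \Hecke^1_{\cU^{l-1}_{l-1}/\THecke^{l-1}_{\cE/T}} \cong \PP(\cU^{l-1}_{l-1}) \to \THecke^{l-1}_{\cE/T} \times C. \]
Since $\cU^{l-1}_{l-1}$ is by construction a family of rank $n$ vector bundles on $C$ parametrised by $\THecke^{l-1}_{\cE/T}$, this projection is a $\PP^{n-1}$-bundle. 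Composing it with the inductive tower for $\THecke^{l-1}_{\cE/T}$, pulled back along $\mathrm{id}_C$ (equivalently, taking the product of each stage with $C$), yields the required $l$-step tower terminating at $T \times C^l$.

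There is no substantive obstacle: the definition of the Flag-Quot scheme as an iterated relative Quot scheme, together with the length-one case $\Hecke^1_{\cF/S} \cong \PP(\cF)$, already does all the work. The only thing to be a little careful about is tracking that the $\PP^{n-1}$-bundle structure at each stage comes from a rank $n$ vector bundle, which is immediate from the rank condition in the definition of $\THecke^l_{\cE/T}$, and that the base after the $j$-th step is indeed $\THecke^{l-j}_{\cE/T} \times C^j$, which follows by iterating the base-change identity $\THecke^{i}_{\cE/T} \times_T (T \times C) \cong \THecke^{i}_{\cE/T} \times C$.
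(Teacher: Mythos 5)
Your proof is correct and follows essentially the same route as the paper, which simply cites Remark~\ref{rmk structure of Hecke schemes}\ref{rmk Hecke 1} and \ref{rmk Hecke 2} and says ``by induction''; you have merely spelled out the induction and the rank check explicitly.
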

\begin{proof}
This follows by induction from Remark \ref{rmk structure of Hecke schemes} \ref{rmk Hecke 1} and \ref{rmk Hecke 2}.
%The forgetful map
%\[ \pi_l: \THecke^l_{\cE} \ra \THecke^{l-1}_{\cE} \times_T T \times C \cong   \THecke^{l-1}_{\cE} \times C\]
% defined by
%\[ \pi_l( \cF_l \hookrightarrow \cF_{l-1}  \cdots \cF_{1} \hookrightarrow \cF_0) = (  \cF_{l-1}  \cdots \cF_{1} \hookrightarrow \cF_0, \supp(\cF_{l-1}/\cF_l))\]
%is the projectivisation of the rank $n$ vector bundle $\cU^{l-1}_{l-1}$ on $ \THecke^{l-1}_{\cE} \times C$. 
\end{proof}

By repeatedly applying the projective bundle formula, we obtain the following corollary.

\begin{cor}\label{cor motive THecke}
Let $\cE$ be family of rank $n$ degree $d$ vector bundles over $C$ parametrised by a scheme $T$. Then
\[ M(\THecke^l_{\cE/T}) \cong M(T) \otimes M(C \times \PP^{n-1})^{\otimes l}. \]
\end{cor}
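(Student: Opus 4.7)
The plan is straightforward: combine Lemma \ref{lemma it proj bdle} with the projective bundle formula and Künneth. Explicitly, I will induct on $l$, using the projective bundle decomposition at the top of each tower and recognising that the base of each tower is of the form (previous stage) $\times C$.

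For the base case $l=0$ the statement is trivial, and for $l=1$ Remark \ref{rmk structure of Hecke schemes}\ref{rmk Hecke 1} identifies $\THecke^1_{\cE/T}$ with the projective bundle $\PP(\cE) \to T\times C$, so the projective bundle formula in $\DM(k)$ together with the Künneth isomorphism give
\[
M(\THecke^1_{\cE/T}) \simeq M(T\times C)\otimes M(\PP^{n-1}) \simeq M(T)\otimes M(C\times \PP^{n-1}).
\]
For the inductive step, by Remark \ref{rmk structure of Hecke schemes}\ref{rmk Hecke 2} the map $\pi_l \colon \THecke^l_{\cE/T} \simeq \PP(\cU^{l-1}_{l-1}) \to \THecke^{l-1}_{\cE/T}\times C$ is a $\PP^{n-1}$-bundle, so the projective bundle formula yields
\[
M(\THecke^l_{\cE/T}) \simeq M(\THecke^{l-1}_{\cE/T}\times C)\otimes M(\PP^{n-1}).
\]
Applying Künneth and the inductive hypothesis $M(\THecke^{l-1}_{\cE/T})\simeq M(T)\otimes M(C\times\PP^{n-1})^{\otimes (l-1)}$ gives
\[
M(\THecke^l_{\cE/T}) \simeq M(T)\otimes M(C\times\PP^{n-1})^{\otimes(l-1)} \otimes M(C)\otimes M(\PP^{n-1}) \simeq M(T)\otimes M(C\times\PP^{n-1})^{\otimes l},
\]
where the last step uses Künneth once more to combine the trailing $M(C)\otimes M(\PP^{n-1})$ into $M(C\times\PP^{n-1})$.

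There is no real obstacle; the only mild point is that one needs the projective bundle formula to hold in the relative generality of a projective bundle over an arbitrary (possibly singular) $k$-variety such as $\THecke^{l-1}_{\cE/T}\times C$, which is a standard property of $\DM(k)$ recalled in the background section of the paper. No properties of the specific vector bundle $\cU^{l-1}_{l-1}$ beyond its rank are used, so the argument is purely formal once Lemma \ref{lemma it proj bdle} is in hand.
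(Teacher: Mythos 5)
Your proof is correct and follows essentially the same route as the paper, which simply states that the corollary follows from Lemma \ref{lemma it proj bdle} by "repeatedly applying the projective bundle formula"; your induction on $l$ using the tower $\THecke^l_{\cE/T}\simeq\PP(\cU^{l-1}_{l-1})\to\THecke^{l-1}_{\cE/T}\times C$ together with K\"unneth just spells out that repetition explicitly.
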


In fact, we will need to explicitly identify this isomorphism. For a rank $n$ vector bundle $\cV$ over a scheme $X$, the projective bundle $\pi : \PP(\cV) \ra X$ is equipped with a line bundle $\cL:=\cO_{\PP(\cV)}(1)$. The first chern class of this line bundle defines a map $c_1(\cL) : M(\PP(\cV)) \ra \QQ\{1\}$ and for $i \geq 0$ it induces maps
\[ c_1(\cL)^{\otimes i} : M(\PP(\cV)) \stackrel{M(\Delta)}{\lra} M(\PP(\cV))^{\otimes i}  \stackrel{ c_1(\cL)^{\otimes i} }{\lra} \QQ\{i\}\]
which together define a map $[c_1(\cL)]:=\oplus_{i=0}^{n-1} c_1(\cL)^{\otimes i} : M(\PP(\cV)) \ra \oplus_{i=-0}^{n-1}\QQ\{i\} \simeq M(\PP^{n-1})$. Then the projective bundle formula isomorphism can be explicitly written as the composition
\[ \PB(\cL) : M(\PP(\cV)) \stackrel{M(\Delta)}{\lra} M(\PP(\cV))^{\otimes 2} \stackrel{M(\pi) \otimes [c_1(\cL)] }{\xrightarrow{\hspace*{1cm}}} M(X) \otimes M(\PP^{n-1}). \]

\begin{rmk}\label{rmk line bundles THecke} 
On $\THH^l:=\THecke^l_{\cE/T}$, we can inductively define $l$ line bundles $\cL_1^1, \cdots, \cL_1^l$ by
\begin{enumerate}[label={\upshape(\roman*)}]
\item $\cL^l_l:=\cO(1) \ra \PP(\cU_{l-1}^{l-1})$,
\item $\cL^l_j:= p_l^* \cL^{l-1}_j$ for $ 1 \leq j \leq l-1$, where $p_l : \THH^l \ra \THH^{l-1}$.
\end{enumerate}
These $l$ line bundles on $\THH^l$ induce a morphism
\[ \PB(\cL_\bullet^l) :  M(\THecke^l_{\cE/T}) \stackrel{M(\Delta)}{\lra} M(\THecke^l_{\cE/T})^{\otimes l+1}  \stackrel{M(P_l) \otimes [c_1(\cL^l_\bullet)]}{\lra} M(T \times C^l) \otimes M( \PP^{n-1})^{\otimes l}, \]
where $ [c_1(\cL^l_\bullet)] = \otimes_{i=1}^l  [c_1(\cL^l_i)]$. Furthermore,  on $\THH^l$ we have two universal objects:
\begin{enumerate}[label={\upshape(\roman*)}]
\item a surjection $\pi_l^*\cU^{l-1}_{l-1} \twoheadrightarrow \cL^l_l$ over $\THH^l$ (as $\THH^l\cong \PP(\cU^{l-1}_{l-1})$ by Remark \ref{rmk line bundles THecke}),
\item a short exact sequence $0 \ra \cU_l^l \ra \cU^l_{l-1} \ra \cT_l^l \ra 0$ over $\THH^l \times C$.
\end{enumerate}
Since $ \cU^l_{l-1} = (p_l \times \mathrm{id}_C)^*\cU^{l-1}_{l-1}$, the relationship between the line bundle $\cL^l_l \ra \THH^l$ and the family of degree 1 torsion sheaves $\cT^l_l$ on $C$ parametrised by $\THH^l$ is 
\[\cL^l_l \cong ((\mathrm{id}_{\THH^l} \times \pi_l) \circ \Delta_{\THH^l})^*\cT_l^l,\] 
for $(\mathrm{id}_{\THH^l} \times \pi_l) \circ \Delta_{\THH^l}: \THH^l \stackrel{\Delta_{\THH^l}}{\lra}  \THH^l \times_{\THH^{l-1}}  \THH^l \stackrel{\mathrm{id} \times \pi_l}{\lra} \THH^l \times_{\THH^{l-1}} (\THH^{l-1} \times C) \simeq \THH^l \times C$. In fact, for $ 1 \leq j \leq l$, we can define maps 
\[ r^l_j = (\mathrm{id}_{\THH^l} \times \pr^l_j) \circ \Delta_{\THH^l} : \THH^l \ra \THH^l \times_T \THH^l \ra  {\THH^l} \times_T (T \times C) \cong {\THH^l} \times C \]
such that $r_l^l = (\mathrm{id}_{\THH^l} \times \pi_l) \circ \Delta_{\THH^l}$. For $j < l$, the family of degree 1 torsion sheaves $\cT^l_j:= \cU^l_{j-1}/\cU^l_{j}$ on $C$ parametrised by $\THH^l$ is obtained as a pullback of $\cT^{l-1}_j$ via the map $p_l \times \mathrm{id}_C$. Hence, for $ 1 \leq j \leq l$, we have isomorphisms relating the line bundles and families of torsion sheaves
\begin{equation}\label{eq line bundles and torsion quotients}
\cL^l_j \cong (r^l_j)^*\cT^l_j.
\end{equation}
\end{rmk}

We can now give a precise description of the isomorphism in Corollary \ref{cor motive THecke}.

\begin{lemma}\label{lemma PB for THecke with line bundles}
The tuple $\cL_\bullet^l = (\cL^l_1, \cdots , \cL^l_l)$ of line bundles on $\THecke^l_{\cE/T}$ induces a morphism
\[ \PB(\cL_\bullet^l) :  M(\THecke^l_{\cE/T}) \stackrel{M(\Delta)}{\lra} M(\THecke^l_{\cE/T})^{\otimes l+1}  \stackrel{M(P_l) \otimes [c_1(\cL^l_\bullet)]}{\xrightarrow{\hspace*{1cm}}} M(T \times C^l) \otimes M( \PP^{n-1})^{\otimes l}, \]
%where $P_l: \THecke^l_{\cE/T} \ra T \times C^l$ is the map to the base of this iterated projective bundle and $[c_1(\cL^l_\bullet)] = \otimes_{i=1}^l [c_1(\cL_i^l)]$. Furthermore, this morphism 
which coincides with the composition
\[ M(\THecke^l_{\cE/T}) \stackrel{\PB(\cL_l^l)}{\lra} M(\THecke^{l-1}_{\cE/T}) \otimes M( C \times \PP^{n-1}) \stackrel{\PB(\cL_{l-1}^{l-1}) \otimes M(\mathrm{id})}{\lra} \cdots \lra M(T \times C^l) \otimes M( \PP^{n-1})^{\otimes l}\]
and thus is an isomorphism.
\end{lemma}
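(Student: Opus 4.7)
The plan is to proceed by induction on $l$. For the base case $l=1$, the scheme $\THecke^1_{\cE/T} \cong \PP(\cE)$ is a projective bundle over $T \times C$ with tautological line bundle $\cL^1_1 = \cO(1)$ and $P_1 = \pi_1$, so both $\PB(\cL_\bullet^1)$ and the trivial one-step composition reduce to the standard motivic projective bundle isomorphism for $\pi_1$.

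For the inductive step, the strategy is to unfold both sides of the claimed equality into a common explicit expression involving an $(l+2)$-fold diagonal of $\THecke^l_{\cE/T}$. On the ``full'' side, the factorisations $P_l = (P_{l-1} \times \mathrm{id}_C) \circ \pi_l$ and $\pi_l = (p_l, \pr_l^l)$ from Remark \ref{rmk structure of Hecke schemes} yield $M(P_l) = (M(P_{l-1} \circ p_l) \otimes M(\pr_l^l)) \circ M(\Delta_2)$; substituting this into the definition of $\PB(\cL_\bullet^l)$ produces
\[(M(P_{l-1} \circ p_l) \otimes M(\pr_l^l) \otimes [c_1(\cL^l_1)] \otimes \cdots \otimes [c_1(\cL^l_l)]) \circ M(\Delta_{l+2})\]
up to a canonical reshuffling of tensor factors in the target. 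On the iterated side, unfolding $\PB(\cL^l_l)$ via $\pi_l = (p_l, \pr_l^l)$ gives $(M(p_l) \otimes M(\pr_l^l) \otimes [c_1(\cL^l_l)]) \circ M(\Delta_3)$, and then applying the inductive hypothesis on the $M(\THecke^{l-1}_{\cE/T})$ tensor factor, together with naturality of the diagonal and the pullback identity $\cL^l_j = p_l^*\cL^{l-1}_j$ from Remark \ref{rmk line bundles THecke} (which implies $[c_1(\cL^{l-1}_j)] \circ M(p_l) = [c_1(\cL^l_j)]$), yields the same expression.

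Since each stage of the iterated composition is the motivic projective bundle formula for a $\PP^{n-1}$-bundle (cf.\ Lemma \ref{lemma it proj bdle}), every intermediate map is an isomorphism, and the composite is as well. The main technical obstacle is the careful bookkeeping of the iterated diagonals, Künneth identifications, and the ordering of tensor factors in the target $M(T \times C^l) \otimes M(\PP^{n-1})^{\otimes l}$; once both sides are brought to the common form above, the equality reduces to a routine check using the symmetric monoidal structure and naturality in $\DM(k)$.
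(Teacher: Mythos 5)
Your proposal is correct and follows essentially the same route as the paper's own proof: both exploit the recursive structure $P_l=(P_{l-1}\times\mathrm{id}_C)\circ\pi_l$, the pullback compatibility $c_1(\cL^{l-1}_j)\circ M(p_l)=c_1(\cL^l_j)$ coming from $p_l^*\cL^{l-1}_j=\cL^l_j$, and the commutative square relating the diagonal to the K\"unneth isomorphism. The paper's proof is a terse sketch of this exact argument; your version merely phrases it as an explicit induction on $l$.
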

\begin{proof}  For this one uses that Chern classes are compatible with pullbacks, so that $ c_1(\cL^{l-1}_j) \circ M(p_l)  = c_1(\cL^l_{j})$ for $1 \leq j \leq l-1$, as $p_l^*(\cL_j^{l-1})=\cL_j^l$. Then one uses that $P_l$ is defined as the composition of the maps $\pi_i$ for $i \leq l$ together with the fact that for any morphism $f : X \ra Y_1 \times Y_2$, we have the following commutative diagram
\[ \xymatrix{  M(X) \ar[r]^{M(\Delta)\quad\quad} \ar[d]_{M(f)} & M(X) \otimes M(X) \ar[d]^{M(f_1) \otimes M(f_2)}\\
 M(Y_1 \times Y_2) \ar[r]^{\simeq \quad} & M(Y_1) \otimes M(Y_2),}\]
where $f_i := \mathrm{pr}_i \circ f : X \ra Y_i$ and the lower map in this square is the K\"{u}nneth isomorphism.
\end{proof}

\subsection{The motive of the scheme of Hecke correspondences}

There is a forgetful map
\[ f : \THecke^l_{\cE/T}  \ra \Hecke^l_{\cE/T} \]
that we will use to relate the motive of $\Hecke^l_{\cE/T}$ to that of $\THecke^l_{\cE/T} $, which we computed above. In fact, we plan to use the above section to compare these motives, as the map $f$ is small. To prove that $f$ is a small map, we will describe it as the pullback of a small map along a flat morphism by generalising an argument of Heinloth \cite[Proposition 11]{Heinloth_LaumonBDay}.

Let $\Coh_{0,l}$ denote the stack of rank $0$ degree $l$ coherent sheaves on $C$ and let $\TCoh_{0,l}$ denote the stack which associates to a scheme $S$ the groupoid
\[ \TCoh_{0,l} (S) = \langle \cT_1 \hookrightarrow \cT_2  \hookrightarrow \cdots \hookrightarrow \cT_l : \cT_i \in \Coh_{0,i}(S)  \rangle.\]
The forgetful map $f' : \TCoh_{0,l} \ra \Coh_{0,l}$ fits into the following commutative diagram
 \begin{equation}\label{diag p q}
 \xymatrix{  \THecke^l_{\cE/T} \ar[d]^{f} \ar[r]^{\tilde{\gr}} & T \times \TCoh_{0,l} \ar[d]^{\mathrm{id}_T \times f'} \ar[r] & T \times C^l \ar[d]\\
\Hecke^l_{\cE/T} \ar[r]^{{\gr}} & T \times \Coh_{0,l} \ar[r] & T \times C^{(l)} }
 \end{equation}
such that the left square in this diagram is Cartesian. Furthermore, by \cite[Theorem 3.3.1]{laumon}, the map $f'$ is small and generically a $S_l$-covering. By Lemma \ref{lem:small_bc}, $\mathrm{id}_T \times f'$ is small and generically a $S_{l}$ covering. Since the morphism $\gr$ is smooth and thus flat (see the proof of \cite[Proposition 11]{Heinloth_LaumonBDay}), we deduce by Lemma \ref{lem:small_bc} that $f$ is small and generically a $S_l$-covering. By Theorem \ref{thm:actions}, there is an induced $S_l$-action on $M(\THecke^l_{\cE/T})$ and we can now prove the following result.

\begin{thm}\label{thm Sl action on THecke}
Let $\cE$ be family of rank $n$ degree $d$ vector bundles over $C$ parametrised by a smooth $k$-scheme $T$. Then via the isomorphism $M(\THecke^l_{\cE/T}) \cong M(T) \otimes M(C \times \PP^{n-1})^{\otimes l}$ of Corollary \ref{cor motive THecke}, the $S_l$-action permutes the $l$-copies of $M(C \times \PP^{n-1})$. Moreover, we have
\[ M(\Hecke^l_{\cE/T}) \cong M(T) \otimes M(\Sym^l(C \times \PP^{n-1})). \]
\end{thm}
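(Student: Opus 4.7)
The second assertion follows from the first. By the discussion preceding the statement, $f: \THH^l_{\cE/T} \to \Hecke^l_{\cE/T}$ is small, proper, surjective, and generically a principal $S_l$-bundle, so Theorem \ref{thm:actions} gives $M(\Hecke^l_{\cE/T}) \simeq M(\THH^l_{\cE/T})^{S_l}$. Since invariants---being the image of an idempotent---commute with tensoring by a motive carrying trivial action, and Example \ref{ex:sym} identifies $(M(C \times \PP^{n-1})^{\otimes l})^{S_l} \simeq M(\Sym^l(C \times \PP^{n-1}))$, the $S_l$-equivariant isomorphism provided by the first assertion (with $S_l$ acting trivially on $M(T)$ and by permutation on $M(C \times \PP^{n-1})^{\otimes l}$) yields the claimed formula.

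For the first assertion, the plan is to identify the $S_l$-action on the open locus and then extend it. Let $C^{l,\circ} \subset C^l$ denote the open subscheme of tuples of distinct points and set $\THH^{l,\circ} := P_l^{-1}(T \times C^{l,\circ})$. On this open locus, $f$ restricts to a genuine principal $S_l$-bundle, and $\sigma \in S_l$ acts on a flag $\cF_\bullet = (\cF_l \subsetneq \cdots \subsetneq \cF_0 = \cE)$ whose successive quotients are supported at distinct points $x_1, \ldots, x_l$ by sending it to the unique flag whose $j$-th successive quotient is supported at $x_{\sigma(j)}$; existence and uniqueness follow from the canonical decomposition of $\cE/\cF_l$ into its length-one local summands. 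Consequently, $P_l|_{\THH^{l,\circ}}$ is $S_l$-equivariant for the permutation action on $C^{l,\circ}$, and the equivariance of the universal torsion quotients $\cT^l_j = \cU^l_{j-1}/\cU^l_j$, combined with the identity \eqref{eq line bundles and torsion quotients}, yields $\sigma^* \cL^l_j \simeq \cL^l_{\sigma(j)}$ on $\THH^{l,\circ}$. Invoking the explicit formula for $\PB(\cL_\bullet^l)$ from Lemma \ref{lemma PB for THecke with line bundles} together with the compatibility of Chern classes with pullback, one then deduces that the restriction of $\PB(\cL_\bullet^l)$ to $\THH^{l,\circ}$ intertwines the algebraic $S_l$-action on $M(\THH^{l,\circ})$ with the permutation action on $M(T \times C^{l,\circ}) \otimes M(\PP^{n-1})^{\otimes l}$.

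I expect the main obstacle to be extending this equivariance from the open locus to all of $\THH^l$. The action from Theorem \ref{thm:actions} is, by construction, $\psi_f$ (Lemma \ref{lem:psi}) applied to the unique endomorphism of $f_!f^!\QQ_{\Hecke^l}$ that lifts the open-locus action via the injectivity of Proposition \ref{prop:end_ring_iso}. To match the transported permutation automorphism on $M(\THH^l)$ globally, the plan is to invoke Proposition \ref{prop:relative_action}, identifying $\End_{\DM(\Hecke^l)}(f_*\QQ_{\THH^l})$ with $\CH_d(\THH^l \times_{\Hecke^l} \THH^l)$, together with Lemma \ref{lem:small_dim} \ref{comp}, which guarantees that top-dimensional cycles on this fiber product are closures of cycles from the open part. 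Both automorphisms will then be represented by the same cycle---the closure of the graph of the algebraic $S_l$-action on $\THH^{l,\circ}$---and so coincide.
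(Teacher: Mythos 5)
The proposal follows essentially the same strategy as the paper: identify the algebraic $S_l$-action on the open locus $\THH^{l,\circ}$, observe that it permutes the universal torsion quotients and hence (via equation~\eqref{eq line bundles and torsion quotients}) the line bundles $\cL^l_j$, so that the restriction of $\PB(\cL^l_\bullet)$ to the open locus intertwines the algebraic action with the permutation action, and then invoke the small-map machinery of Theorem~\ref{thm:actions} together with Example~\ref{ex:sym} to conclude. Your treatment of the open locus and of the second assertion is fine and actually slightly more explicit than the paper's (the canonical splitting of $\cE/\cF_l$ into length-one local summands, the explicit $\sigma^*\cL^l_j\simeq\cL^l_{\sigma(j)}$).

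One word of caution on the final step, which you rightly single out as the main obstacle and which the paper itself passes over rather quickly. You propose to show that the transported permutation automorphism $\alpha_\sigma := \PB(\cL^l_\bullet)^{-1}\circ\mathrm{perm}_\sigma\circ\PB(\cL^l_\bullet)$ and the Theorem~\ref{thm:actions}-action are ``represented by the same cycle'' in $\CH_d(\THH^l\times_{\Hecke^l}\THH^l)$. For this to make sense you need $\alpha_\sigma$ to lie in the image of $\psi_f:\End_{\DM(\Hecke^l)}(f_!f^!\QQ_{\Hecke^l})\to\End_{\DM(k)}(M(\THH^l))$, i.e.\ to come from a relative endomorphism over $\Hecke^l$, so that Proposition~\ref{prop:relative_action} and Lemma~\ref{lem:small_dim}~\ref{comp} apply. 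This is not automatic: $\PB(\cL^l_\bullet)$ is built from the map $P_l:\THH^l\to T\times C^l$ (which does not factor through $\Hecke^l\to T\times C^{(l)}$), so the fact that $\alpha_\sigma$ is supported on $\THH^l\times_{\Hecke^l}\THH^l$ as a correspondence needs a separate justification, for example by checking that $M(f)\circ\alpha_\sigma=M(f)$ and working out what this buys you, or by identifying $\alpha_\sigma$ directly with the action of the closure-of-graph cycle $[\overline{\Gamma_\sigma^\circ}]\in\CH_d(\THH^l\times\THH^l)$ via a correspondence computation. Without closing this point, the plan is an outline rather than a proof; to be fair, the paper's own ``Therefore, the induced $S_l$-action...permutes the $l$-copies'' is equally terse on exactly this issue, so you have reconstructed the intended argument at essentially the same level of rigor.
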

\begin{proof}
We note that as $T$ is smooth, both $\Hecke^l_{\cE/T}$ and $\THecke^l_{\cE/T}$ are smooth over $k$. By Lemma \ref{lemma PB for THecke with line bundles}, there is an isomorphism
\[ M(\THecke^l_{\cE/T}) \cong M(T) \otimes M(C \times \PP^{n-1})^{\otimes l} \]
induced by $l$ line bundles $\cL_1^l, \dots, \cL_l^l$ on $ \THecke^l_{\cE/T}$ (which are the pullbacks of the ample bundles on each projective bundle) and the projection $P_l : \THecke^l_{\cE/T} \ra T \times C^l$. The $S_l$-action on $M (\THecke^l_{\cE/T} )$ from Theorem \ref{thm:actions} is induced by the $S_l$-action on the open subset $\THecke^{l,\circ}_{\cE/T} = p^{-1} ( \Hecke^{l,\circ}_{\cE/T})$, where $\Hecke^{l,\circ}_{\cE/T}$ parametrises length $l$ Hecke correspondences whose degree $l$ torsion quotient has support consisting of $l$ distinct points. The $S_l$-action on $\THecke^{l,\circ}_{\cE/T} $ corresponds to permuting the $l$ universal degree $1$ torsion quotients $\cT_1^l, \dots, \cT_l^l$. By Remark \ref{rmk line bundles THecke}, this corresponds to permuting the $l$ line bundles $\cL_i^l$ on $\THecke^l_{\cE/T}$ (see equation \eqref{eq line bundles and torsion quotients}). Therefore, the induced $S_l$-action on $M(\THecke^l_{\cE/T})$ permutes the $l$-copies of $M(C \times \PP^{n-1})$. As $f$ is a small proper surjective map of smooth varieties, Theorem \ref{thm:actions} yields an isomorphism
\[ M (\THecke^l_{\cE/T} )^{S_l} \cong M( \Hecke^l_{\cE/T} ).\]
Finally, by Example \ref{ex:sym} we have $\Sym^{nl-d}M(C \times \PP^{n-1})\simeq M(\Sym^{nl-d}(C\times \PP^{n-1}))$.
\end{proof}

In particular, if we apply this to $T = \Spec k$ and $\cE = \cO_C(D)^{\oplus n}$ for a divisor $D$ on $C$, we obtain Theorem \ref{thm intro FDiv} as a special case of this result. Furthermore, the motive of the Quot scheme of length $l$ torsion quotients of a locally free sheaf $\cE$ over $T \times C/T$ only depends on the rank of $\cE$; we explicitly state this as a corollary for $T = \spec k$, as there are similar recent results concerning the class of such Quot schemes in the Grothendieck ring of varieties \cite{BFP, Ricolfi}.

\begin{cor}\label{cor motive torsion quot}
Let $\cE$ be rank $n$ locally free sheaf on $C$ and $l \in \NN$. Then the motive of the Quot scheme $ \quot_{C/k}^{(0,l)}(\cE)$ parametrising length $l$ torsion quotients of $\cE$ is
\[ M(\quot_{C/k}^{(0,l)}(\cE)) \cong  M(\Sym^l(C \times \PP^{n-1})).\]
In particular, this motive only depends on the rank $n$ of $\cE$.
\end{cor}

\section{The formula for the motive of the stack of vector bundles}\label{sec formula}

\subsection{The transition maps in the inductive system}\label{sec lift trans}

Throughout this section we fix $x \in C(k)$ and let $s_x : \Spec k \ra C$ be the inclusion of $x$. The inclusion $\cO_C \hookrightarrow \cO_C(x)$ defines an inductive sequence of morphisms $i_l :  \Div_{n,d}(l) \ra \Div_{n,d}(l+1)$ indexed by $l \in \NN$. In this section, we will lift the maps $i_l :  \Div_{n,d}(l) \ra \Div_{n,d}(l+1)$ to the schemes of iterated Hecke correspondences and compute the induced maps of motives. We recall that 
\[ \Div_{n,d}(l) = \Hecke^{nl -d}_{\cO_C(lx)^{\oplus n}/ \Spec k} \quad \text{and} \quad \FDiv_{n,d}(l) = \THecke^{nl -d}_{\cO_C(lx)^{\oplus n}/ \Spec k}\]
and we will drop the subscripts for Hecke schemes  throughout the rest of this section.

The inclusion $\cO_C \hookrightarrow \cO_C(x)$ induces an inclusion $\cO_C^{\oplus n} \hookrightarrow \cO_C(x)^{\oplus n}$. Any full flag
\[ \cF_\bullet =(\cO_C^{\oplus n}=\cF_0 \subsetneq \cF_1 \subsetneq \cdots \subsetneq \cF_{n-1} \subsetneq \cF_n =  \cO_C(x)^{\oplus n})\]
determines, for $l \in \NN$, a morphism $A_l({\cF_\bullet}) : \FDiv_{n,d}(l)  \ra \FDiv_{n,d}(l+1) $ lifting the morphism $ \Div_{n,d}(l) \ra \Div_{n,d}(l+1)$. Recall that we have maps $P_{nl-d} : \FDiv_{n,d}(l) = \THecke^{nl-d} \ra C^{nl-d}$ defined in Remark \ref{rmk structure of Hecke schemes}. The morphism $A_l({\cF_\bullet})$ sits in a commutative diagram
 \begin{equation}\label{commutative diag A a}
\xymatrixcolsep{3pc} \xymatrix{ \FDiv_{n,d}(l)  \ar[d]_{P_{nl-d}} \ar[r]^-{A_l({\cF_\bullet})} &\FDiv_{n,d}(l+1)  \ar[d]^{P_{n(l+1)-d}}\\
C^{nl-d} \ar[r]^-{c_l} &C^{n(l+1) -d}. }
 \end{equation}
where $c_l:= s_{x}^n \times \mathrm{id}_{C^{nl-d}} $. Recall that $\pr_j^{nl-d} : \FDiv_{n,d}(l) \ra C^{nl-d} \ra C$ denotes the composition of $P_{nl-d}$ with the projection onto the $j$th factor. We have
\begin{equation}\label{eq A_l and projections}
\pr_j^{n(l+1)-d} \circ A_l({\cF_\bullet}) = \left \{ \begin{array}{ll} t_x & \text{if } 1 \leq j \leq n\\ \pr^{nl-d}_{j-n }& \text{if } n+1 \leq j \leq n(l+1)-d, \end{array} \right. 
\end{equation}
where $t_x : \FDiv_{n,d}(l) \ra \spec k \ra C$ is the composition of the structure map with $s_x$. 

Similarly, a tuple $p:=(p_1, \cdots , p_n) \in (\PP^{n-1})^n$ induces $b_l(p) : (\PP^{n-1})^{nl-d} \ra (\PP^{n-1})^{n(l+1)-d}$ which is the identity on the last $nl-d$ factors. We define
\[ a_l(p):=c_l \times b_l(p) : (C \times \PP^{n-1})^{nl-d} \ra (C \times \PP^{n-1})^{n(l+1)-d}.\]

\begin{lemma}
Every choice of flag $\cF_\bullet$ induces the same map of motives
\[ M(A_l):= M(A_l({\cF_\bullet})) : M(\FDiv_{n,d}(l)) \ra M( \FDiv_{n,d}(l+1))\]
and every choice of tuple $p \in (\PP^{n-1})^n$ induces the same map of motives
\[ M(b_l)=M(b_l(p)) : M(\PP^{n-1})^{\otimes nl-d} \ra M(\PP^{n-1})^{\otimes n(l+1)-d}.\]
\end{lemma}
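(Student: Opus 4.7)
The plan is to realise each of $A_l(\cF_\bullet)$ and $b_l(p)$ as the composition of a ``point selection'' morphism with a universal morphism defined over a smooth projective variety $X$ whose motive is a direct sum of Tate motives. Independence of the choice then follows from the fact that for such $X$, the group $\Hom_{\DM(k)}(\QQ_k, M(X)) \simeq \CH_0(X)_\QQ \simeq \QQ$ is one-dimensional, so every $k$-point of $X$ induces the same map to $M(X)$.

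The second assertion is immediate from this principle: by construction $b_l(p) = \iota_p \times \id_{(\PP^{n-1})^{nl-d}}$, where $\iota_p : \Spec k \to (\PP^{n-1})^n$ is the inclusion of the point $p$, so by K\"unneth we have $M(b_l(p)) = M(\iota_p) \otimes \id$. Since $(\PP^{n-1})^n$ admits a cellular decomposition, its motive is a finite direct sum of Tate motives and all $k$-points are rationally equivalent, making $M(\iota_p)$ independent of $p$.

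For the first assertion, the main task is to construct a universal morphism
\[\tilde{A}_l : \mathrm{Fl}(k^n) \times \FDiv_{n,d}(l) \to \FDiv_{n,d}(l+1),\]
where $\mathrm{Fl}(k^n)$ is the full flag variety of $k^n \simeq \kappa(x)^n$, satisfying $A_l(\cF_\bullet) = \tilde{A}_l \circ (\cF_\bullet \times \id)$ for every $\cF_\bullet$. Via the canonical isomorphism $\cO_C(x)^{\oplus n}/\cO_C^{\oplus n} \simeq s_{x*}\kappa(x)^n$, full flags of subsheaves of $\cO_C(x)^{\oplus n}$ containing $\cO_C^{\oplus n}$ correspond bijectively and functorially in families to full flags in the $n$-dimensional $k$-vector space $\kappa(x)^n$; pulling back the universal flag on $\mathrm{Fl}(k^n)$ to $\mathrm{Fl}(k^n) \times C$, taking preimages under this quotient, tensoring with $\cO_C(lx)$, and splicing with the universal flag on $\FDiv_{n,d}(l)$ will yield $\tilde{A}_l$ on $S$-points for any $k$-scheme $S$.

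Once $\tilde{A}_l$ is in hand the argument concludes exactly as for $b_l$: K\"unneth gives $M(A_l(\cF_\bullet)) = M(\tilde{A}_l) \circ (M(\cF_\bullet) \otimes \id)$, and the Bruhat decomposition realises $\mathrm{Fl}(k^n)$ as a cellular variety, so $M(\cF_\bullet) : \QQ_k \to M(\mathrm{Fl}(k^n))$ is independent of the flag by the same one-dimensional Chow group argument. The only step with genuine content is the family-theoretic construction of $\tilde{A}_l$; this is essentially bookkeeping about subsheaves in an iterated Flag-Quot scheme and I do not expect any conceptual obstacle.
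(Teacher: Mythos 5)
Your proof is correct and follows essentially the same approach as the paper: both reduce the statement to the fact that the flag variety $\GL_n/B$ (resp.\ a product of projective spaces) is a smooth projective variety whose $k$-points all induce the same morphism $\QQ\{0\}\to M(-)$. The paper invokes $\AA^1$-chain connectedness of $\GL_n/B$ where you invoke one-dimensionality of $\CH_0(\GL_n/B)_\QQ$ via the Bruhat decomposition --- interchangeable facts for cellular varieties --- and you spell out the universal morphism $\tilde{A}_l$ over the flag variety, which the paper's two-sentence proof leaves implicit but does rely on.
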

\begin{proof}
A flag $\cF_\bullet$ as above is specified by a full flag in $k^n$, which is parametrised by the flag variety $\GL_n/B$, which is $\AA^1$-chain connected and so all flags induce the same map of motives. The second statement follows similarly as projective spaces are also $\AA^1$-chain connected.
\end{proof}

As we are only interested in studying these maps motivically, we will drop the choice of flag $\cF_\bullet$ and tuple $p$ from the notation and simply write $A_l$, $b_l$ and $a_l$ for these morphisms.

By Lemma \ref{lemma PB for THecke with line bundles}, there is an $S_{nl-d}$-equivariant isomorphism \[\PB(\cL^{nl-d}_\bullet): M(\FDiv_{n,d}(l)) = M( \THecke^{nl-d}) \ra M(C \times \PP^{n-1})^{\otimes nl-d}\]
determined by line bundles $\cL_j^{nl-d}$  on $\THecke^{nl-d}$ for $1 \leq j \leq nl-d$. Moreover, we have homomorphisms $\varphi_l : S_{nl-d} \hookrightarrow S_{n(l+1)-d}$ such that the maps $c_l : C^{nl-d} \ra C^{n(l+1)-d}$ are equivariant. %note this is inclusion is as the last nl-d digits.

\begin{prop}\label{prop THecke trans}
For each $l$, we have a commutative diagram
\begin{equation}\label{eq prop THecke tran}
\xymatrixcolsep{3pc}
 \xymatrix{ M( \FDiv_{n,d}(l)) \ar[d]_{\PB(\cL^{nl-d}_\bullet)}^{\wr} \ar[r]^-{M(A_l)} &M(  \FDiv_{n,d}(l+1) )\ar[d]^{\PB(\cL^{n(l+1)-d}_\bullet)}_{\wr}\\
M(C \times \PP^{n-1})^{\otimes nl-d} \ar[r]^-{M(a_l )} &M(C \times \PP^{n-1})^{\otimes n(l+1) -d} }
\end{equation}
such that the horizontal maps are equivariant with respect to $\varphi_l : S_{nl-d}  \hookrightarrow S_{n(l+1)-d}$. 
\end{prop}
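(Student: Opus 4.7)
The plan is to analyze $A_l$ through the iterated projective bundle structure of $\FDiv_{n,d}(l+1) = \THecke^{n(l+1)-d}$, realising it as the inclusion of a particular fiber, and then to translate this geometric description into the line-bundle-plus-support data used by the isomorphism $\PB$. The key observation is that the twisted flag $\cF_0(lx) \subsetneq \cdots \subsetneq \cF_n(lx) = \cO_C((l+1)x)^{\oplus n}$ defines a $k$-point $\pi_\cF \in \THecke^n_{\cO_C((l+1)x)^{\oplus n}/\Spec k}$ at which the ``innermost'' universal bundle $\cU^n_n$ restricts to the constant family $\cO_C(lx)^{\oplus n}$ on $\Spec k \times C$. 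By the inductive construction of the tower (Remark \ref{rmk structure of Hecke schemes}\ref{rmk Hecke 2}), the fiber of $\THecke^{n(l+1)-d} \to \THecke^n$ over $\pi_\cF$ is canonically identified with $\FDiv_{n,d}(l) = \THecke^{nl-d}_{\cO_C(lx)^{\oplus n}/\Spec k}$, and under this identification $A_l({\cF_\bullet})$ becomes the inclusion of this fiber.

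Given this, the commutativity of \eqref{eq prop THecke tran} reduces to computing the pullback along $A_l$ of each piece of data defining $\PB$. For the line bundles, using $\cL_j^l \simeq (r_j^l)^*\cT_j^l$ from \eqref{eq line bundles and torsion quotients}: for $j \leq n$ the torsion quotient $\cT_j^{n(l+1)-d}$ is pulled back from $\THecke^n \times C$, and its restriction over $\pi_\cF$ is the constant family $(\cF_j/\cF_{j-1})(lx) \simeq \kappa(x)$, so $A_l^*\cL_j^{n(l+1)-d}$ is trivial; for $j > n$ the fiber identification matches $\cT_j^{n(l+1)-d}$ with $\cT_{j-n}^{nl-d}$, giving $A_l^*\cL_j^{n(l+1)-d} \simeq \cL_{j-n}^{nl-d}$. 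Equation \eqref{eq A_l and projections} is the analogous statement on the support maps. Substituting these into the explicit tensor factorisation of $\PB$ from Lemma \ref{lemma PB for THecke with line bundles} then yields on the nose the morphism $M(a_l)$: the first $n$ factors on the target are the constant maps determined by $(x, p_i) \in C \times \PP^{n-1}$, while the last $nl-d$ factors identify tautologically with those on the source.

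The equivariance then follows directly from this explicit description. By Theorem \ref{thm Sl action on THecke} the $S_{n(l+1)-d}$-action on $M(C \times \PP^{n-1})^{\otimes n(l+1)-d}$ permutes the tensor factors, and the subgroup $\varphi_l(S_{nl-d})$ permutes precisely the last $nl-d$ factors, which is exactly the block on which $M(a_l)$ acts as the identity; the first $n$ factors are constant and hence trivially equivariant. This circumvents the difficulty flagged in Remark \ref{rmk:functoriality}: rather than arguing from the original construction of the group action via the small-map quotient, we exploit the explicit permutation description on both sides.

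I expect the main obstacle to be the line-bundle pullback computation in the second step. Although the underlying geometric picture --- $A_l$ being the inclusion of a fiber of $\THecke^{n(l+1)-d} \to \THecke^n$ --- is transparent, verifying the identities $A_l^*\cT_j^{n(l+1)-d} \simeq \kappa(x)$ for $j \leq n$ and $A_l^*\cT_j^{n(l+1)-d} \simeq \cT_{j-n}^{nl-d}$ for $j > n$ requires carefully unwinding the inductive construction of the Flag-Quot scheme and tracking how the universal flag of subsheaves and its successive torsion quotients restrict along each level of the tower.
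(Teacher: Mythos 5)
Your proposal is correct and follows essentially the same approach as the paper: compute the pullbacks of the universal torsion quotients (equivalently, the canonical line bundles $\cL^{N}_j$) along $A_l$, feed that into the explicit $\PB$ formula of Lemma \ref{lemma PB for THecke with line bundles} using that $c_1(\cO)=0$ so that $[c_1(\cO)]$ is inclusion of a point, and read off equivariance from the permutation-of-tensor-factors description. The one genuine added value in your write-up is the clean geometric observation that $A_l(\cF_\bullet)$ is the inclusion of the fiber of the tower projection $\THecke^{n(l+1)-d}\to\THecke^n$ over the $k$-point $\pi_{\cF}$ given by the twisted flag $\cF_\bullet(lx)$; the paper states the torsion-sheaf pullback identities \eqref{eq pullback torsion under A} directly, and your fiber picture is a tidy way to justify them. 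Minor slip: for $j\leq n$ the relevant successive quotient is $(\cF_{n-j+1}/\cF_{n-j})(lx)$ rather than $(\cF_j/\cF_{j-1})(lx)$ (the outer flag in $\THecke^N$ is indexed from the ambient bundle inward, which reverses the indexing of $\cF_\bullet$), but this does not affect the conclusion that it is the constant skyscraper at $x$.
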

\begin{proof} 
We claim that the pullbacks via $A_l$ (for any flag $\cF_\bullet$) of the line bundles $\cL_j^{n(l+1)-d}$ satisfy
\begin{equation}\label{eq pullback line bundles under A}
 A_l^* \cL^{n(l+1)-d}_j = \left \{ \begin{array}{ll} \cO_{\THecke^{nl-d}} & \text{if } 1 \leq j \leq n \\ \cL^{nl-d}_{j-n} & \text{if } n+1 \leq j \leq n(l+1)-d.  \end{array} \right.
\end{equation}
We recall that we have $n(l+1)-d$ families of degree $1$ torsion sheaves on $C$ parametrised by $\FDiv_{n,d}(l+1)=\THecke^{n(l+1)-d}$ given by the successive quotients of the universal flag of vector bundles on $\THecke^{n(l+1)-d} \times C$; these families of torsion sheaves are denoted by
\[ \cT^{n(l+1)-d}_j:= \cU^{n(l+1)-d}_{j-1}/\cU^{n(l+1)-d}_{j}  \quad \text{ for } \: 1 \leq j \leq  n(l+1)-d .\]
The pullbacks of these families of torsion sheaves along $A_l$ (for any flag $\cF_\bullet$) are as follows:
\begin{equation}\label{eq pullback torsion under A}
 (A_l \times \mathrm{id}_C)^* \cT^{n(l+1)-d}_j = \left \{ \begin{array}{ll} p_C^*k_x & \text{if } 1 \leq j \leq n \\ \cT^{nl-d}_{j-n} & \text{if } n+1 \leq j \leq n(l+1)-d, \end{array} \right.
\end{equation}
where $p_C : \THecke^{n(l+1)-d} \times C \ra C$ denote the projection and $k_x$ is the skyscraper sheaf at $x$. Consequently, Claim \eqref{eq pullback line bundles under A} follows from equations \eqref{eq line bundles and torsion quotients}, \eqref{eq A_l and projections} and \eqref{eq pullback torsion under A}.

Similarly, if we let  $\cM_j^{nl-d}$ denote the line bundle on $(C \times \PP^{n-1})^{nl-d}$ obtained by pulling back $\cO_{\PP^{n-1}}(1)$ via the $j$th projection, we have
\begin{equation*}
 a_l^* \cM^{n(l+1)-d}_j = \left \{ \begin{array}{ll} \cO_{(C \times \PP^{n-1})^{nl-d}} & \text{if } 1 \leq j \leq n \\ \cM^{nl-d}_{j-n} & \text{if } n+1 \leq j \leq n(l+1)-d.  \end{array} \right.
\end{equation*}
Since the action of the symmetric groups on these motives corresponds to permuting the order of these line bundles, we see that $M(A_l)$ and $M(a_l)$ are both equivariant with respect to $\varphi_l$.

Finally let us prove the commutativity of the square \eqref{eq prop THecke tran}. For this we require the explicit formula for the iterated projective bundle isomorphisms given in Lemma \ref{lemma PB for THecke with line bundles}:
\[ \PB(\cL_\bullet^{nl-d}) =(M(P_{nl-d}) \otimes [c_1(\cL_\bullet^{nl-d})]) \circ M(\Delta_{\THecke^{nl-d}}),\]
where $[c_1(\cL_\bullet^{nl-d})] : M(\THecke^{nl-d}) \ra M(\PP^{n-1})^{nl-d}$ is the map induced by powers of the first chern classes of the line bundles $\cL_j^{nl-d}$ for $ 1 \leq j \leq nl-d$. If we insert $n$ copies of the structure sheaf on $\THecke^{nl-d}$ into this family, we obtain a map
\[ [c_1(\cO, \dots , \cO,\cL_\bullet^{nl-d})] : M(\THecke^{nl-d}) \ra M(\PP^{n-1})^{n(l+1)-d}. \]
In fact, since $c_1(\cO)$ is the zero map, we see that $[c_1(\cO)] : M(\THecke^{nl-d}) \ra M(\PP^{n-1})$ is the composition of the structure map $M(\THecke^{nl-d}) \ra \QQ\{ 0 \}$ with the inclusion $\QQ\{ 0 \} \hookrightarrow M(\PP^{n-1})$ of any point in $\PP^{n-1}$. Therefore, we can write the lower diagonal composition in \eqref{eq prop THecke tran} as
\[ M(a_l) \circ \PB(\cL_\bullet^{nl-d}) = (M(c_l \circ P_{nl-d}) \otimes [c_1(\cO, \dots , \cO, \cL_\bullet^{nl-d})] )\circ M(\Delta_{\THecke^{nl-d}}). \]
Then by \eqref{eq pullback line bundles under A}, we have 
\[ [c_1(\cO, \dots , \cO, \cL_\bullet^{nl-d})] =  [c_1(\cL_\bullet^{n(l+1)-d})] \circ M(A_l)\]
and as diagram \eqref{commutative diag A a} commutes, we deduce that
\[ \PB(\cL_\bullet^{n(l+1)-d}) \circ M(A_l) =  (M(c_l \circ P_{nl-d}) \otimes [c_1(\cO, \dots , \cO, \cL_\bullet^{nl-d}) ])\circ M(\Delta_{\THecke^{nl-d}}), \]
which completes the proof that the square \eqref{eq prop THecke tran} commutes.
\end{proof}

Since $a_l : (C \times \PP^{n-1})^{nl-d} \ra (C \times \PP^{n-1})^{n(l+1)-d}$ is equivariant with respect to $\varphi_l : S_{nl-d}  \hookrightarrow S_{n(l+1) -d}$, we obtain an induced map between the associated symmetric products
\[ \xymatrixcolsep{5pc} \xymatrix{   (C \times \PP^{n-1})^{nl-d} \ar[r]^{a_l} \ar[d] & (C \times \PP^{n-1})^{n(l+1)-d} \ar[d] \\ 
\Sym^{nl-d}(C \times \PP^{n-1}) \ar[r]^-{\Sym(a_l)} & \Sym^{n(l+1)-d}(C \times \PP^{n-1}).}\]
By Theorem \ref{thm intro FDiv}, there is an isomorphism
\[ e_l : M(\Div_{n,d}(l)) \cong M(\FDiv_{n,d}(l))^{S_{nl-d}} \cong \Sym^{nl-d}M(C \times \PP^{n-1}) \]
where the second isomorphism is induced by the $S_{nl-d}$-equivariant isomorphism $\PB(\cL^{nl-d}_\bullet)$.

\begin{cor}\label{cor1 transition maps div}
The following diagram commutes
\[ \xymatrixcolsep{5pc} \xymatrix{  M(\Div_{n,d}(l))  \ar[r]^{M(i_l)} \ar[d]_{e_l}^{\wr} & M(\Div_{n,d}(l+1)) \ar[d]^{e_{l+1}}_{\wr} \\ 
\Sym^{nl-d}M(C \times \PP^{n-1}) \ar[r]^-{M(\Sym(a_l))} & \Sym^{n(l+1)-d}M(C \times \PP^{n-1}).}\]
\end{cor}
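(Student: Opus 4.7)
The plan is to deduce the corollary by passing to $S$-invariants in the square of Proposition~\ref{prop THecke trans}, using Theorem~\ref{thm:actions} to identify $M(\FDiv_{n,d}(l))^{S_{nl-d}}$ with $M(\Div_{n,d}(l))$. The key geometric input, immediate from the construction of $A_l$ as a lift of $i_l$, is the relation $f_{l+1} \circ A_l = i_l \circ f_l$, where $f_l : \FDiv_{n,d}(l) \to \Div_{n,d}(l)$ is the small forgetful map. Writing $\Pi_l \in \End_{\DM(k)}(M(\FDiv_{n,d}(l)))$ for the projector onto the $S_{nl-d}$-invariants, Theorem~\ref{thm:actions} provides an isomorphism $\alpha_l : M(\FDiv_{n,d}(l))^{S_{nl-d}} \xrightarrow{\sim} M(\Div_{n,d}(l))$ satisfying $M(f_l) = \alpha_l \circ \Pi_l$; by construction $e_l = \PB^S_{nl-d} \circ \alpha_l^{-1}$, where $\PB^S_{nl-d}$ denotes the restriction of the $S_{nl-d}$-equivariant isomorphism $\PB(\cL^{nl-d}_\bullet)$ to invariants, combined with Example~\ref{ex:sym} on the target.

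The technical step is to show that $M(A_l)$ and $M(a_l)$ descend along these projectors. The $\varphi_l$-equivariance established in Proposition~\ref{prop THecke trans} asserts $M(A_l) \circ \sigma = \varphi_l(\sigma) \circ M(A_l)$ for every $\sigma \in S_{nl-d}$. Averaging over $\sigma$ and using that $\Pi_{l+1} \circ \tau = \Pi_{l+1}$ for any $\tau \in S_{n(l+1)-d}$ gives $\Pi_{l+1} \circ M(A_l) \circ \Pi_l = \Pi_{l+1} \circ M(A_l)$, so that $\Pi_{l+1} \circ M(A_l)$ factors uniquely as $\bar{A}_l \circ \Pi_l$ for some $\bar{A}_l : M(\FDiv_{n,d}(l))^{S_{nl-d}} \to M(\FDiv_{n,d}(l+1))^{S_{n(l+1)-d}}$. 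The identical averaging applied to $M(a_l)$ produces a map $\bar{a}_l$ between the corresponding $S$-invariants of $M(C \times \PP^{n-1})^{\otimes \bullet}$. Since $a_l$ is already $\varphi_l$-equivariant at the level of schemes, it descends to a morphism $\Sym(a_l)$ between the symmetric products, and an equivariant application of Example~\ref{ex:sym} identifies $\bar{a}_l$ with $M(\Sym(a_l))$.

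Once this is in place, the square of Proposition~\ref{prop THecke trans}, whose verticals are $S$-equivariant and whose horizontals are $\varphi_l$-equivariant, descends to the commutative square \[ \xymatrix{ M(\FDiv_{n,d}(l))^{S_{nl-d}} \ar[d]_{\PB^S_{nl-d}}^{\wr} \ar[r]^-{\bar{A}_l} & M(\FDiv_{n,d}(l+1))^{S_{n(l+1)-d}} \ar[d]^{\PB^S_{n(l+1)-d}}_{\wr} \\ \Sym^{nl-d} M(C \times \PP^{n-1}) \ar[r]^-{M(\Sym(a_l))} & \Sym^{n(l+1)-d} M(C \times \PP^{n-1}). } \] On the other hand, composing the identity $\Pi_{l+1} \circ M(A_l) = \bar{A}_l \circ \Pi_l$ with $\alpha_{l+1}$ and using $M(f_{l+1}) \circ M(A_l) = M(i_l) \circ M(f_l) = M(i_l) \circ \alpha_l \circ \Pi_l$ together with the surjectivity of $\Pi_l$ onto invariants forces $\alpha_{l+1} \circ \bar{A}_l = M(i_l) \circ \alpha_l$. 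Combining these two commutativities and unravelling $e_l = \PB^S_{nl-d} \circ \alpha_l^{-1}$ gives exactly the claimed diagram.

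The main obstacle is the bookkeeping in the averaging step and the precise identification $\bar{a}_l = M(\Sym(a_l))$ --- the latter is really an equivariant refinement of Example~\ref{ex:sym}, and both rely essentially on the rational coefficients through the projectors $\Pi_l$ and Theorem~\ref{thm:actions}.
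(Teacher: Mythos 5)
Your proposal is correct and takes essentially the same route as the paper: both deduce the corollary by descending the square of Proposition \ref{prop THecke trans} to $S$-invariants, using the $\varphi_l$-equivariance of $M(A_l)$ from that proposition and the fact that $A_l$ lifts $i_l$, together with the identification $M(\FDiv_{n,d}(l))^{S_{nl-d}} \simeq M(\Div_{n,d}(l))$ coming from Theorem \ref{thm:actions}. The paper's proof is extremely terse (a single sentence plus the invariants square); your version spells out the averaging bookkeeping, the identity $M(f_{l+1})\circ M(A_l) = M(i_l)\circ M(f_l)$, and the cancellation of $\Pi_l$, all of which are left implicit in the paper.
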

\begin{proof}
By the equivariance property of $M(A_{l})$ observed in Proposition \ref{prop THecke trans} and the fact that $A_l$ lifts $i_l$, the isomorphisms of Theorem \ref{thm intro FDiv} fit in a commutative diagram
  \[
\xymatrixcolsep{5pc}  \xymatrix{
    M(\FDiv_{n,d}(l))^{S_{nl-d}} \ar[r]^{M(A_l)} \ar[d]_{\wr} & M(\FDiv_{n,d}(l+1))^{S_{n(l+1}-d} \ar[d]_{\wr} \\
M(\Div_{n,d}(l)) \ar[r]^{M(i_l)} & M(\Div_{n,d}(l+1))
  }
\]
The corollary then follows from combining this diagram with the diagram of Proposition \ref{prop THecke trans}.
\end{proof}

\subsection{A proof of the formula}\label{sec proof1}

The rational point $x \in C(k)$ gives rise to a decomposition $M(C) = \QQ\{ 0 \} \oplus \overline{M}(C)$, where $\overline{M}(C) = M_1(\Jac(C)) \oplus \QQ\{ 1\}$, see \cite[Proposition 4.2.5]{AHEW}. The motive of $\Jac(C)$ can be recovered from the motive $ M_1(\Jac(C))$ using \cite[Proposition 4.3.5]{AHEW}:
\[ M(\Jac(C)) = \bigoplus_{i=0}^{2g} \Sym^i(M_1(\Jac(C))) = \bigoplus_{i=0}^{\infty} \Sym^i(M_1(\Jac(C))) . \]
We can then write
\[ M(C \times \PP^{n-1})  = M(C) \otimes \left(\bigoplus_{i=0}^{n-1} \QQ\{i \}\right) = \QQ\{ 0 \} \oplus \overline{M}(C) \oplus   \bigoplus_{i=1}^{n-1} M(C) \{i \}.\]
Let $M_{C,n}:= \overline{M}(C) \oplus  \bigoplus_{i=1}^{n-1}M(C)\{i \}$; then (for example, by \cite[Lemma B.3.1]{AHEW})
\[ \Sym^{nl-d}(M(C \times \PP^{n-1})) =  \Sym^{nl-d}(\QQ\{ 0 \} \oplus M_{C,n})= \bigoplus_{i=0}^{nl-d} \Sym^i(M_{C,n}). \]

\begin{lemma}\label{lemma tran map Div using MC}
There is a commutative diagram
\[ \xymatrixcolsep{5pc} \xymatrix{  M(\Div_{n,d}(l))  \ar[r]^{M(i_l)} \ar[d]^{\wr} & M(\Div_{n,d}(l+1)) \ar[d]_{\wr} \\ 
\bigoplus\limits_{i=0}^{nl-d} \Sym^i(M_{C,n}) \ar[r] & \bigoplus\limits_{i=0}^{n(l+1)-d} \Sym^i(M_{C,n})}\]
where the lower map is the obvious inclusion.
\end{lemma}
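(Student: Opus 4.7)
The plan is to deduce the diagram directly from Corollary \ref{cor1 transition maps div}, which identifies $M(i_l)$ with $M(\Sym(a_l))$ via the isomorphism $e_l$. The vertical isomorphisms in the statement are the composition of $e_l$ and $e_{l+1}$ with the canonical decomposition $\Sym^{k}M(C\times\PP^{n-1}) \simeq \bigoplus_{i=0}^{k}\Sym^{i}(M_{C,n})$ discussed just before the lemma. Thus the task reduces to identifying $M(\Sym(a_l))$ with the obvious inclusion. The geometric input is that, by the very definitions of $c_l$ and $b_l$, the map $a_l$ inserts a fixed $k$-rational point of $C\times\PP^{n-1}$ into the first $n$ coordinates and is the identity on the remaining $nl-d$ coordinates. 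By the K\"unneth isomorphism, $M(a_l)$ then factors as the external product of $n$ copies of the map $\alpha : \QQ\{0\}\to M(C\times\PP^{n-1})$ induced by inclusion of the chosen point with the identity on the remaining $nl-d$ tensor factors.

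Next I would check that, under the decomposition $M(C\times\PP^{n-1}) = \QQ\{0\}\oplus M_{C,n}$, the map $\alpha$ is precisely the inclusion of the $\QQ\{0\}$ summand. This is essentially tautological: both splittings $M(C) = \QQ\{0\}\oplus \overline{M}(C)$ (via the rational point $x$) and $M(\PP^{n-1}) = \bigoplus_{i=0}^{n-1}\QQ\{i\}$ (via any chosen point) are constructed so that the motivic class of the chosen point hits the generator of the top $\QQ\{0\}$ summand; tensoring these two splittings gives exactly the decomposition $\QQ\{0\}\oplus M_{C,n}$ with the desired property. Consequently, writing $[\iota]:=\alpha(1)$, the element $[\iota]$ is the unit $1\in\QQ\{0\}\subset \Sym^1 M(C\times\PP^{n-1})$ of the graded commutative algebra $\bigoplus_{k\geq 0}\Sym^{k}M(C\times\PP^{n-1})$.

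The final step is to analyze the induced map on symmetric powers. Since the equivariance of $a_l$ is with respect to $\varphi_l:S_{nl-d}\hookrightarrow S_{n(l+1)-d}$ placing the smaller group on the last $nl-d$ coordinates, $M(\Sym(a_l))$ corresponds to multiplication by $[\iota]^{n}$ in the symmetric algebra. Applying the binomial decomposition
\[
\Sym^{k}(\QQ\{0\}\oplus M_{C,n}) \simeq \bigoplus_{i=0}^{k}\Sym^{k-i}(\QQ\{0\})\otimes \Sym^{i}(M_{C,n}) \simeq \bigoplus_{i=0}^{k}\Sym^{i}(M_{C,n}),
\]
and observing that multiplication by $1\in\QQ\{0\}$ is the identity under the canonical identifications $\Sym^{k-i}(\QQ\{0\}) \simeq \QQ\{0\}$, one sees that multiplication by $[\iota]^{n}$ sends each $\Sym^{i}(M_{C,n})$-summand of the source identically to the corresponding $\Sym^{i}(M_{C,n})$-summand of the target, which is exactly the obvious inclusion.

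The main obstacle is Step 3, namely the careful bookkeeping of the binomial decomposition of $\Sym^{k}(A\oplus B)$ and the verification that multiplication by the unit of the $A$-summand corresponds, under these canonical identifications, to the obvious inclusion on the direct sum indexed by the $B$-degree. Once this purely algebraic fact is in place (and it is already implicit in \cite[Lemma B.3.1]{AHEW}), the lemma follows formally from Corollary \ref{cor1 transition maps div} together with the identification of $\alpha$ with the inclusion of the $\QQ\{0\}$ summand performed in Step 2.
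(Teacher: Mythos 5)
Your proposal is correct and follows essentially the same route as the paper's proof: both reduce via Corollary \ref{cor1 transition maps div} to the identification $M(a_l) = \iota^{\otimes n}\otimes\mathrm{id}$ (with $\iota:\QQ\{0\}\hookrightarrow M(C\times\PP^{n-1})$) and then pass to symmetric powers. You spell out the final symmetrisation step — the binomial decomposition of $\Sym^k(\QQ\{0\}\oplus M_{C,n})$ and the interpretation as multiplication by $[\iota]^n$ — which the paper compresses into ``it thus follows'', but the underlying argument is the same.
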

\begin{proof}
Let us start with the description of the transition map given in Corollary \ref{cor1 transition maps div}. We see that the map $a_l : (C \times \PP^{n-1})^{nl-d} \ra (C \times \PP^{n-1})^{n(l+1)-d}$ can be described motivically as
\[ M(a_l) : M(C \times \PP^{n-1})^{\otimes nl-d} \cong \QQ\{ 0 \}^{\otimes n} \otimes M(C \times \PP^{n-1})^{\otimes (nl-d)}  \stackrel{\iota^{\otimes n} \otimes M(\mathrm{id})}{\xrightarrow{\hspace*{1cm}}}   M(C \times \PP^{n-1})^{\otimes n(l+1)-d} \]
where $\iota : \QQ \{0 \} \ra M(C \times \PP^{n-1}) = \QQ\{0 \} \oplus M_{C,n}$ is the natural inclusion of this direct factor. It thus follows that the symmetrised map $M(\Sym(a_l))$ is the claimed inclusion.
\end{proof}

\begin{thm}
If $C(k) \neq \emptyset$, then the motive of $\Bun_{n,d}$ satisfies
\[ M(\Bun_{n,d}) \simeq \hocolim_{l} \left( \bigoplus_{i=0}^{nl-d}  \Sym^i(M_{C,n}) \right) \simeq \bigoplus_{i=0}^{\infty} \Sym^i(M_{C,n}).\]
More precisely, we have
\[M(\Bun_{n,d}) \simeq  M(\Jac(C)) \otimes M(B\GG_m) \otimes \bigotimes_{i=1}^{n-1} Z(C, \QQ\{i\}). \]
\end{thm}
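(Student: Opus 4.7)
The plan is to combine the earlier results of the paper --- Theorem \ref{thm old main}, Theorem \ref{thm intro FDiv}, and, crucially, Lemma \ref{lemma tran map Div using MC} --- to express $M(\Bun_{n,d})$ as the homotopy colimit of a sequence of split inclusions of direct sums, and then to reorganise the resulting infinite direct sum into the tensor product form stated in the theorem.

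Applying Theorem \ref{thm old main} with $D = x$ gives $M(\Bun_{n,d}) \simeq \hocolim_l M(\Div_{n,d}(l))$. Using Theorem \ref{thm intro FDiv} together with the decomposition $M(C\times\PP^{n-1})=\QQ\{0\}\oplus M_{C,n}$ and the binomial formula $\Sym^m(A\oplus B) \simeq \bigoplus_{i+j=m}\Sym^i(A)\otimes\Sym^j(B)$, each term is rewritten as $\bigoplus_{i=0}^{nl-d}\Sym^i(M_{C,n})$. Lemma \ref{lemma tran map Div using MC} identifies the transition maps with the evident inclusions, which are split monomorphisms. Since $\DM(k,\QQ)$ admits small coproducts, the homotopy colimit of such a sequence is the infinite direct sum $\bigoplus_{i=0}^{\infty}\Sym^i(M_{C,n})$, which establishes the first displayed isomorphism.

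For the second displayed formula, I would expand $M_{C,n} = M_1(\Jac(C))\oplus \QQ\{1\}\oplus \bigoplus_{i=1}^{n-1} M(C)\{i\}$ (coming from $\overline{M}(C) = M_1(\Jac(C))\oplus \QQ\{1\}$) and use the multiplicative identity $\bigoplus_{m\geq 0}\Sym^m(A\oplus B) \simeq \bigl(\bigoplus_{m\geq 0}\Sym^m(A)\bigr)\otimes\bigl(\bigoplus_{m\geq 0}\Sym^m(B)\bigr)$ to split the total symmetric algebra into three kinds of factors. Each factor is then identified with a known motive: $\bigoplus_{m\geq 0}\Sym^m(M_1(\Jac(C)))\simeq M(\Jac(C))$ by \cite[Proposition 4.3.5]{AHEW}; $\bigoplus_{m\geq 0}\Sym^m(\QQ\{1\})\simeq \bigoplus_{m\geq 0}\QQ\{m\}\simeq M(B\GG_m)$, using that $\QQ\{1\}$ is concentrated in even cohomological degree so its symmetric powers are untwisted Tate motives; and $\bigoplus_{m\geq 0}\Sym^m(M(C)\{i\}) \simeq \bigoplus_{m\geq 0}\Sym^m(M(C))\{im\}\simeq \bigoplus_{m\geq 0}M(C^{(m)})\{im\} = Z(C,\QQ\{i\})$, using the standard isomorphism $\Sym^m(M(C))\simeq M(C^{(m)})$. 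Tensoring these identifications yields the stated formula.

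The main obstacle has already been cleared by Lemma \ref{lemma tran map Div using MC}, which itself rests on the small-maps machinery of Section \ref{sec small maps} and the line-bundle comparison in Proposition \ref{prop THecke trans}; the present argument is then purely formal. The only point worth double-checking is that the homotopy colimit of a sequence of split monomorphisms in $\DM(k,\QQ)$ computes the corresponding coproduct, which is standard given that the category admits all small coproducts.
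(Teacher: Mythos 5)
Your proposal is correct and follows essentially the same route as the paper's own proof: Theorem~\ref{thm old main} for the homotopy colimit presentation, Lemma~\ref{lemma tran map Div using MC} to identify the transition maps with split inclusions of direct sums, and the standard $\Sym^*$ identities (exponential for direct sums, $\Sym^*M_1(\Jac C)\simeq M(\Jac C)$, $\Sym^*\QQ\{1\}\simeq M(B\GG_m)$, $\Sym^*(M(C)\{i\})\simeq Z(C,\QQ\{i\})$) for the final factorisation. Incidentally, you correctly use $\Sym^*(M_1\oplus M_2)\simeq \Sym^*(M_1)\otimes\Sym^*(M_2)$, which fixes a small typo in the paper's proof (the paper writes $\oplus$ where $\otimes$ is meant).
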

\begin{proof}
The first claim follows from Lemma \ref{lemma tran map Div using MC} and Theorem \ref{thm old main}. For the second claim, we introduce the notation  $\Sym^*(M):= \oplus_{i=0}^{\infty} \Sym^i(M)$ for any motive $M$; then 
\begin{enumerate}[label={\upshape(\roman*)}]
\item $\Sym^*(M_1 \oplus M_2) = \Sym^*(M_1) \otimes \Sym^*(M_2)$ (by \cite[Lemma B.3.1]{AHEW}),
\item $Z(C,\QQ\{i\}) = \Sym^*(M(C)\{i \})$ (by definition of the motivic Zeta function), 
\item $\Sym^*(\QQ\{1\})= M(B\GG_m)$ (see \cite[Example 2.21]{HPL} based on  \cite[Lemma 8.7]{totaro}),
\item $\Sym^*(M_1(\Jac(C))) = M(\Jac(C))$ (by \cite[Proposition 4.3.5]{AHEW}),
\end{enumerate}
and the formula follows from these observations.
\end{proof}

\subsection{An alternative proof using previous results}\label{sec proof2}

We will give a second proof of this formula for $M(\Bun_{n,d})$, also based on Corollary \ref{cor1 transition maps div} but which follows more closely our previous work \cite{HPL}. The idea is to describe the unsymmetrised transition maps $M(a_l)$ by decomposing the motives $M(C \times \PP^{n-1})^{\otimes nl-d}$ using $M(\PP^{n-1}) = \oplus_{i=0}^{n-1} \QQ\{ i \}$.

\begin{rmk}
By returning to the decomposition $M(\PP^{n-1}) = \oplus_{i=0}^{n-1} \QQ\{ i \}$, we can describe the maps $M(a_l)$ explicitly. Indeed we have a decomposition $M(C \times \PP^{n-1})^{\otimes nl-d}$ indexed by ordered tuples $I = (i_1, \cdots, i_{nl-d}) \in  \cI_l:= \{ 0, \cdots ,n-1\}^{\times \: nl -d }$ of the form
\[ M(C \times \PP^{n-1})^{\otimes nl-d} = \bigoplus_{I \in \cI_l} \bigotimes_{j=1}^{nl-d}  M(C) \{i_j\}  = \bigoplus_{{I \in \cI_l}} M(C^{nl+d}) \{ | I |  \},\]
where $| I | = \sum_{j=1}^{nl-d} i_j$. 

There is a map $h_l: \cI_l \ra \cI_{l+1}$ given by $I \mapsto (0,\dots, 0,I)$ (inserting $n$ zeros) such that the map $M(a_l): M(C \times \PP^{n-1})^{\otimes nl-d} \ra M(C \times \PP^{n-1})^{\otimes n(l+1)-d}$ sends the direct summand indexed by $I \in \cI_l$ to the direct summand indexed by the tuple $h_l(I) \in \cI_{l+1}$ via the map
\begin{equation}\label{eq beh unsym trans}
M(c_l)\{| I |\} : M(C^{nl+d})\{ | I | \} \ra M(C^{n(l+1)+d})\{ | I | \} = M(C^{n(l+1)+d})\{ | (0,\dots, 0,I) | \}. 
\end{equation}
\end{rmk}

The $S_{nl-d}$-action on $M(C \times \PP^{n-1})^{\otimes nl-d}$ permutes these direct summands via the obvious action of $S_{nl-d}$ on $\cI_l$. The invariant part is the motive of $\Sym^{nl-d}(C \times \PP^{n-1})$ which has an associated decomposition. The index set for this decomposition is
\[ \cB_l:=\left\{ m =  (m_0, \dots , m_{n-1}) \in \NN^n : \sum_{i=0}^{n-1} m_i = nl-d \right\}. \]
Moreover, for $I \in \cI_l$, we let $\tau_l(I)_r= \# \{ i_j : i_j = r \}$, then $\tau_l(I) = (\tau_l(I)_0, \dots , \tau_l(I)_{n-1}) \in \cB_l$ and the map $\tau_l : \cI_l \ra \cB_l$ is $S_{nl-d}$-invariant with $| I | = \sum_{i=0}^{n-1} i \tau_l(I)_i$. By grouping together the factors with the same values of $i_j$, there is a map
\begin{equation}\label{eq map induced by s}
C^{nl+d} \ra  \prod_{i=0}^{n-1} \Sym^{\tau_l(I)_i}(C) 
\end{equation}
which is the quotient of the natural action of $\Stab(I) \cong \prod_{i=0}^{n-1} S_{\tau_l(I)_i}$.

\begin{lemma}\label{lemma sym transition maps}
For each $l$, we have a decomposition
\[ M( \Sym^{nl-d}(C \times \PP^{n-1})) = \bigoplus_{m \in \cB_l} \bigotimes_{i=0}^{n-1} \Sym^{m_i}(M(C)) \{i m_i \}\]
such that the following statements hold.
\begin{enumerate}[label={\upshape(\roman*)}]
\item \label{Sym decomp} For each $m \in \cB_l$, we have a commutative diagram
\[ \xymatrix{  M(C \times \PP^{n-1})^{\otimes nl-d} \ar[r] \ar[d] & M( \Sym^{nl-d}(C \times \PP^{n-1}))  \ar[d] \\
\bigoplus\limits_{I \in \tau_l^{-1}(m)} M(C^{nl-d}) \{ | I |  \}  \ar[r] & \bigotimes\limits_{i=0}^{n-1} \Sym^{m_i}(M(C)) \{i m_i \} } \]
where the lower maps are induced by the maps \eqref{eq map induced by s}.
\item \label{Sym trans} The transition maps $M(\Sym(a_l))$ decompose as maps 
\[ \kappa_{m,m'} : \bigotimes_{i=0}^{n-1} \Sym^{m_i}(M(C)) \{i m_i \} \ra \bigotimes_{i=0}^{n-1} \Sym^{m'_i}(M(C)) \{i m'_i \} \]
for $m \in \cB_l$ and $m' \in \cB_{l+1}$ with $\kappa_{m,m'} = 0$ unless $m' = m + (n,0,\dots 0)$, in which case this map is induced by the morphism of varieties
\[ \prod_{i=0}^{n-1}\Sym^{m_i}(C)\ra  \prod_{i=0}^{n-1}\Sym^{m'_i}(C) \]
which is the map $\Sym(s_x^n \times \mathrm{id}_{C^{m_i}})$ on the $0$th factor and the identity on all other factors. 
\end{enumerate}
\end{lemma}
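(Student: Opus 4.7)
The plan is to extract both assertions by taking $S_{nl-d}$-invariants of the unsymmetrised decomposition
\[
M(C \times \PP^{n-1})^{\otimes nl-d} = \bigoplus_{I \in \cI_l} M(C^{nl-d})\{|I|\}
\]
recalled just before the lemma, on which $S_{nl-d}$ acts by simultaneously permuting the factors of $C^{nl-d}$ and the coordinates of the index $I$, and then using Theorem \ref{thm intro FDiv} to identify this invariant motive with $M(\Sym^{nl-d}(C \times \PP^{n-1}))$.

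For part \ref{Sym decomp}, I would first observe that the $S_{nl-d}$-orbits on $\cI_l$ are precisely the fibres of $\tau_l : \cI_l \to \cB_l$, and that the stabiliser of any $I \in \tau_l^{-1}(m)$ is the Young subgroup $\prod_{i=0}^{n-1} S_{m_i}$, acting on $C^{nl-d}$ by permuting the block of $m_i$ factors corresponding to each value of $i_j$. In a $\QQ$-linear idempotent-complete triangulated category, the invariants of a direct sum indexed by an orbit reduce to the invariants of a single summand under its stabiliser, which factor-by-factor gives
\[
\Bigl(\bigoplus_{I \in \tau_l^{-1}(m)} M(C^{nl-d})\{|I|\}\Bigr)^{S_{nl-d}} \simeq \bigotimes_{i=0}^{n-1}\bigl(M(C)^{\otimes m_i}\bigr)^{S_{m_i}}\{i m_i\} \simeq \bigotimes_{i=0}^{n-1}\Sym^{m_i}(M(C))\{i m_i\},
\]
where the last identification uses Example \ref{ex:sym}. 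Summing over $m \in \cB_l$ produces the claimed decomposition, and the commutative diagram in \ref{Sym decomp} is the geometric realisation of this stabiliser-quotient through the map \eqref{eq map induced by s}.

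For part \ref{Sym trans}, the explicit description of $M(a_l)$ at the unsymmetrised level recalled just before the lemma shows that this map sends the summand indexed by $I \in \cI_l$ to the summand indexed by $h_l(I) = (0,\dots,0,I) \in \cI_{l+1}$ via $M(c_l)\{|I|\}$, and to all other summands by zero. Since inserting $n$ zero entries takes $\tau_l(I) = m$ to $\tau_{l+1}(h_l(I)) = m + (n,0,\dots,0)$, it follows that $M(\Sym(a_l))$ is supported on pairs $(m,m')$ with $m' = m + (n,0,\dots,0)$, forcing $\kappa_{m,m'} = 0$ for all other $m'$. In the non-zero case, substituting $c_l = s_x^n \times \mathrm{id}_{C^{nl-d}}$ into the unsymmetrised formula and applying the identification of part \ref{Sym decomp} on both source and target yields precisely the morphism of $\prod_i \Sym^{m_i}(C)$ described in the statement: the $n$ newly inserted $C$-factors all land in the $i = 0$ block, producing $\Sym(s_x^n \times \mathrm{id}_{C^{m_0}})$ there, while the identity is induced on each of the other blocks.

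The main obstacle is the combinatorial bookkeeping: one has to carefully match the $S_{nl-d}$-orbit structure on $\cI_l$, the Young-subgroup structure of the stabilisers, and the grouping of factors in $C^{nl-d}$ by the value of $i_j$, and then verify that all $n$ newly inserted indices in $h_l$ land in the $i = 0$ block under $\tau_{l+1}$. Once this indexing is set up cleanly, both parts of the lemma follow formally from the unsymmetrised formula for $M(a_l)$ and the results of $\S$\ref{sec small maps}.
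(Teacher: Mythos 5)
Your proof follows essentially the same route as the paper: decompose the unsymmetrised tensor power by the index set $\cI_l$, observe that the $S_{nl-d}$-orbits are the fibres of $\tau_l$ with stabiliser the Young subgroup $\prod_i S_{m_i}$, pass to invariants via Example~\ref{ex:sym} (equivalently, the quotient map \eqref{eq map induced by s}), and then read off the transition map from the unsymmetrised description \eqref{eq beh unsym trans} and the equivariance of $h_l$ under $\varphi_l$. The only cosmetic difference is that you phrase the key reduction explicitly as an orbit-stabiliser principle for invariants in a $\QQ$-linear idempotent-complete category, where the paper carries out the same computation slightly more concretely.
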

\begin{proof}
We will give the decomposition and the proof of \ref{Sym decomp} simultaneously, by collecting the direct summands in the decomposition of $M(C \times \PP^{n-1})^{\otimes nl-d}$ which are preserved by the $S_{nl-d}$-action and taking their invariant parts. For this, we recall that there is a $S_{nl-d}$-action on $\cI_l$ and the map $\tau_l : \cI_l \ra \cB_l$ is $S_{nl-d}$-invariant and the fibres consist of single orbits. For $I \in \cI_l$ with $m = \tau_l(I)$, we note that the quotient of the associated action of $\Stab(I) = \prod_{i=0}^{n-1} S_{m_i}$ on $C^{nl-d}$ is isomorphic to $\prod_{i=0}^{n-1} \Sym^{m_i}(C)$. Therefore, the motive appearing in the left lower corner of the diagram in statement \ref{Sym decomp} is a direct summand of $M(C \times \PP^{n-1})^{\otimes nl-d}$ that is preserved by the $S_{nl-d}$-action and its $S_{nl-d}$-invariant piece is precisely the motive appearing in the lower right corner. This proves the first statement and the decomposition. 

To describe the behaviour of the symmetrised transition maps with respect to this decomposition, we recall that the unsymmetrised transition maps send the direct summand indexed by $I \in \cI_l$ to $h_l(I)=(0,...0,I) \in \cI_{l+1}$. The unsymmetrised transition maps on these direct summands are described by \eqref{eq beh unsym trans} and so it remains to describe the induced map on the invariant parts for the actions of the symmetric groups. Since $h_l : \cI_l \ra \cI_{l+1}$ is equivariant for the actions of the symmetric groups via the homomorphism $\varphi_l : S_{nl-d} \hookrightarrow S_{n(l+1)-d}$, it descends to map
\[ \overline{h} : \cB_l \ra \cB_{l+1} \quad \text{where} \quad \overline{h}(m) = m + (n,0, \dots, 0). \]
Thus, $\kappa_{m,m'}$ is zero unless $m' = \overline{h}(m)$. For $m' = \overline{h}(m)$, $I \in \tau_l^{-1}(m)$ and $I' \in \tau_{l+1}^{-1}(m')$ note that
\[ | I | = | I'| = \sum_{i=0}^{n-1} i m_i = \sum_{i=0}^{n-1} i m_i' \]
and
\[ \Stab(I) = \prod_{i=0}^{n-1} S_{m_i} \quad \text{and} \quad \Stab(I') = \prod_{i=0}^{n-1} S_{m'_i} = S_{m_0 + n} \times \prod_{i=1}^{n-1} S_{m_i}. \] 
In particular, the map $c_l = s_x^n \times \mathrm{id} : C^{nl-d} \ra C^{n(l+1)-d}$ is equivariant for the induced actions of $\Stab(I)$ and $\Stab(I')$ and there is a map between the quotients
\[ \xymatrix{  C^{nl-d}  \ar[r]^{c_l} \ar[d] &  C^{n(l+1)-d}  \ar[d] \\
\prod\limits_{i=0}^{n-1} \Sym^{m_i}(C)  \ar[r] & \prod\limits_{i=0}^{n-1} \Sym^{m'_i}(C)   } \]
which is $\Sym(s_x^n \times \mathrm{id}_{C^{m_i}})$ on the $0$th factor and the identity on the other factors. Combined with \ref{Sym decomp}, this concludes the proof of \ref{Sym trans}.
\end{proof}

\begin{cor}\label{cor2 transition maps div}
The transition maps $M(i_l) : M(\Div_{n,d}(l)) \ra M(\Div_{n,d}(l+1))$ fit in the following commutative diagram
\[ \xymatrixcolsep{5pc} \xymatrix{  M(\Div_{n,d}(l))  \ar[r]^{M(i_l)} \ar[d]^{\wr} & M(\Div_{n,d}(l+1)) \ar[d]_{\wr} \\ 
\bigoplus\limits_{m \in \cB_l} \bigotimes\limits_{i=0}^{n-1} \Sym^{m_i}(M(C)) \{i m_i \} \ar[r]^-{\bigoplus\limits_{m,m' } \kappa_{m,m'}} & \bigoplus\limits_{m' \in \cB_{l+1}} \bigotimes\limits_{i=0}^{n-1} \Sym^{m'_i}(M(C)) \{i m'_i \}.}\]
where the maps $\kappa_{m,m'}$ are as in Lemma \ref{lemma sym transition maps}. 
\end{cor}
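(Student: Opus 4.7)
The plan is to simply combine the two previous results into a single commutative diagram. By Corollary \ref{cor1 transition maps div}, we already have a commutative square
\[ \xymatrix{  M(\Div_{n,d}(l))  \ar[r]^{M(i_l)} \ar[d]_{e_l}^{\wr} & M(\Div_{n,d}(l+1)) \ar[d]^{e_{l+1}}_{\wr} \\
\Sym^{nl-d}M(C \times \PP^{n-1}) \ar[r]^-{M(\Sym(a_l))} & \Sym^{n(l+1)-d}M(C \times \PP^{n-1})}\]
identifying $M(i_l)$ with the symmetrised transition map on the Flag-Quot side.

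Next, I would apply part \ref{Sym decomp} of Lemma \ref{lemma sym transition maps} at levels $l$ and $l+1$ to identify the left and right vertical targets as
\[ \Sym^{nl-d}M(C \times \PP^{n-1}) \simeq \bigoplus_{m \in \cB_l} \bigotimes_{i=0}^{n-1} \Sym^{m_i}(M(C)) \{i m_i \} \]
and analogously for $l+1$. Stacking this decomposition isomorphism under the square above gives a commutative diagram whose bottom edge is now the map $M(\Sym(a_l))$ expressed in the two indexed decompositions. By part \ref{Sym trans} of Lemma \ref{lemma sym transition maps}, this bottom map decomposes as $\bigoplus_{m,m'} \kappa_{m,m'}$, with $\kappa_{m,m'} = 0$ unless $m' = m + (n,0,\dots,0)$, in which case it is induced by $\Sym(s_x^n \times \mathrm{id}_{C^{m_0}})$ on the zeroth factor and the identity elsewhere.

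Concatenating the two commutative squares produces the commutative diagram in the statement, with the composite vertical isomorphisms being $e_l$ followed by the decomposition isomorphism from Lemma \ref{lemma sym transition maps}. Since every ingredient is already furnished by the cited results, there is no real obstacle: the only care needed is in verifying that the labels $m \in \cB_l$, $m' \in \cB_{l+1}$ in the bottom row match up under the decompositions at levels $l$ and $l+1$, which is immediate from the way $\overline{h}: \cB_l \to \cB_{l+1}$, $m \mapsto m+(n,0,\dots,0)$, was extracted in the proof of Lemma \ref{lemma sym transition maps}.
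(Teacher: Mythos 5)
Your proposal is correct and follows essentially the same route as the paper, which proves the corollary by combining Corollary \ref{cor1 transition maps div} with Lemma \ref{lemma sym transition maps}; you have merely spelled out the straightforward concatenation of the two commutative squares.
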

\begin{proof} 
This follows from Lemma \ref{lemma sym transition maps} and Corollary \ref{cor1 transition maps div}.
\end{proof}

This looks very similar to \cite[Conjecture 3.9]{HPL}, except we do not know whether the vertical maps in this commutative diagram coincide with the maps given by the Bia{\l}ynicki-Birula decompositions used in the formulation of this conjecture. Nevertheless, with the description of the transition maps in Corollary \ref{cor2 transition maps div}, one can apply the proof of \cite[Theorem 3.18]{HPL} to obtain an alternative proof of the formula for the motive of $\Bun_{n,d}$ appearing in Theorem \ref{main thm}.

\bibliographystyle{abbrv}
\bibliography{references}

\medskip \medskip

\noindent{Freie Universit\"{a}t Berlin, Arnimallee 3, 14195 Berlin, Germany} 

\medskip \noindent{\texttt{hoskins@math.fu-berlin.de, simon.pepin.lehalleur@gmail.com}}

\end{document}